\DeclarePairedDelimiter{\abs}{\lvert}{\rvert}
\DeclarePairedDelimiter{\norm}{\lVert}{\rVert}
\DeclareMathAlphabet{\mathup}{OT1}{\familydefault}{m}{n}
\newcommand{\dx}[1]{\mathop{}\!\mathup{d} #1}
\DeclarePairedDelimiter{\prt}{(}{)}
\DeclarePairedDelimiter{\brk}{[}{]}
\newcommand{\R}{{\mathbb R}}
\newcommand{\Rd}{{\mathbb R^d}}
\newcommand{\curlyH}{\mathcal{H}}
\theoremstyle{plain}
\newtheorem{theorem}{Theorem}[section]
\newtheorem{lemma}[theorem]{Lemma}
\newtheorem{proposition}[theorem]{Proposition}
\newtheorem{corollary}[theorem]{Corollary}
\theoremstyle{remark}
\newtheorem{definition}[theorem]{\bf Definition}
\renewcommand{\i}{^{(i)}}
\newcommand{\1}{^{(1)}}
\newcommand{\2}{^{(2)}}
\newcommand{\ds}{\displaystyle}
\newcommand{\ddt}{\frac{\dx{}}{\dx t}}
\newcommand{\partialt}[1]{\frac{\partial #1}{\partial t}}
\newcommand{\fpartial}[1]{\frac{\partial}{\partial #1}}
\newcommand{\grad}{\nabla}
\renewcommand{\div}{\nabla\cdot}
\title[Nonlocal approximation of anisotropic diffusion]{Nonlocal approximation of an anisotropic cross-diffusion system}
\author{Tomasz D\k{e}biec$^{1}$}
\author{Markus Schmidtchen$^{2}$}
\address{$^{1}$ Institute of Applied Mathematics and Mechanics, University of Warsaw, Banacha 2, 02-097 Warsaw, Poland (t.debiec@mimuw.edu.pl).}
\address{$^{2}$ Institute of Scientific Computing, Faculty of Mathematics, TU Dresden
	(mmarkus.schmidtchen@tu-dresden.de).}
\begin{document}

\maketitle
\begin{abstract}
    Localisation limits and nonlocal approximations of degenerate parabolic systems have experienced a renaissance in recent years. However, only few results cover anisotropic systems. This work addresses this gap by establishing the nonlocal-to-limit for a specific anisotropic cross-diffusion system encountered in population dynamics featuring phase-separation phenomena, i.e., internal layers between different species. A critical element of the proof is an entropy dissipation identity, which we show to hold for any weak solution.
\end{abstract}{}

\vskip .4cm
\begin{flushleft}
    \noindent{\makebox[1in]\hrulefill}
\end{flushleft}
	2010 \textit{Mathematics Subject Classification.} 35K57, 47N60, 35B45, 35K55, 35K65, 35Q92; 
	\newline\textit{Keywords and phrases.} Inviscid Limit, Brinkman-to-Darcy Limit, Tissue Growth,\\ Anisotropic Parabolic-Hyperbolic Cross-Diffusion Systems;\\[-2.em]
\begin{flushright}
    \noindent{\makebox[1in]\hrulefill}
\end{flushright}
\vskip 1.5cm

\section{Introduction}
Nonlocal repulsive interaction equations and nonlinear diffusion equations are popular choices to model repulsive behaviour in biomathematics and population dynamics \cite{CHS2017, MT2015, VS2015, HB2020, PHP2024}, in pedestrian flow or traffic flow \cite{CCS2019, GB2018, Gib2020}. Due to their applicability to real-world scenarios, understanding the fine relationship between nonlocal dispersal and nonlinear diffusion has gained considerable traction in recent years.

Nonlocal models often describe interactions through convolution terms, archetypically of the form
\begin{align*}
    \partialt n = \nabla (n \nabla W \star n),
\end{align*}
while local models, such as nonlinear diffusion, are typically governed by parabolic equations with the quadratic porous medium equation
\begin{align*}
    \partialt n = \nabla (n \nabla n),
\end{align*}
being the local counterpart of the nonlocal equation above. The connection between these two models was first observed in a deterministic setting in \cite{DM1990, LM2001}. Related scaling limits were studied for stochastic particle systems in \cite{Oel1990, Oel2001, Phi2007, FP2008}.

More recently, the subject of localisation limits and nonlocal particle approximations of local equations has received considerable attention -- in \cite{CCP2019}, the authors propose a blob method and establish the limit as a Gamma-convergence result in the quadratic case. In \cite{BE2022}, a similar result is proven for the quadratic porous medium equation and a specific class of quadratic cross-diffusion systems. Using classical parabolic theory, rather than relying on the Wasserstein gradient-flow structure, \cite{DHPP2023} obtained a similar result for specific cross-diffusion systems. Most recently, \cite{CEW2023} managed to establish the localisation limit for a broad class of degenerate parabolic equations for nonlinearities with power-law growth, and \cite{CESW2024, CJT2024} treat the case of linear and fast diffusion equations. In \cite{DIS2024}, the authors show the convergence of a particle method without the double-convolution structure typically encountered in localisation limits. Here, the particles interact through rescaled repulsive Morse interactions and the interaction kernel converges to a Dirac.

In contrast to the aforementioned second-order models, recent works have also established localisation limits for Cahn-Hilliard-type systems \cite{CES2023, ES2023_2}.

A different type of localisation limit has been considered for evolution equations on graphs introduced in \cite{EPSS2021}. However, instead of localising the interaction kernel as in the literature mentioned above, \cite{EHS2023} considers localisation of the connectivity of the graph, obtaining a local continuity equation with nonlocal interactions -- reminiscent of the usual aggregation equation, but with an anisotropy tensor representing the graph structure. The limit equation is, in a sense, the starting point of this paper, where we study the localisation limit of anisotropic interaction equations for systems of interacting species.\\

\subsection{Goal of this paper}

The type of cross-diffusion system we have in mind was introduced initially in \cite{BT1983} as a model for communicable diseases and in \cite{GP1984} to model population dynamics accounting for anti-overcrowding effects. Here, the highest-order terms of the system were of the form
\begin{align}
    \label{eq:xdiff-system-isotropic}
    \partialt {n\i} = D\i \nabla \cdot (n\i \nabla (n\1 + n\2)), \qquad i=1,2.
\end{align}
An interesting feature of this model is the emergence of internal layers in systems whose subpopulations $n\1$ and $n\2$ are segregated initially. This conservation of phase separation property was further studied in a sequence of works \cite{bertsch1987degenerate, bertsch1987interacting, bertsch1985interacting}, and proposed as a tissue growth model with contact inhibition in \cite{bertsch2012nonlinear, mimura2010free}. In recent years, more general well-posedness results were given \cite{CFSS2017, GPS2019, PX2020, LX2021, Jac2021, Jac2022} with strategies covering optimal transport, parabolic regularity theory, and energy-dissipation methods. The existence of Turing-type patterns was studied in \cite{galiano2020turing}. 
Additionally, \cite{DDMS2024, DMS_phenotype} were able to provide existence results for this class of degenerate parabolic system based on an approximation procedure involving a system of nonlocally interacting species. In \cite{jungel2024nonlocal}, the authors apply the strategy introduced in the aforementioned works to a class of anisotropic systems on bounded domains with an additional $\Delta n\i$-term. However, due to strong assumptions on the diffusion matrix, the special case of an anisotropic Darcy-type relation $v= A \nabla p$ is not covered. In this paper, we fill this gap.\newline

\textbf{The system.} Throughout, we consider the following system of equations posed in $\Rd\times(0, T)$
\begin{subequations}
    \label{eq:System}
\begin{align}
    \label{eq:System-species}
	\ds {\partialt {n\i_\nu}} &= \div(n\i_\nu A\nabla m_\nu) + n\i_\nu G\i(n_\nu),\quad i=1,2,
\end{align}
coupled through 
\begin{align}
    \label{eq:System-Brinkman}
    -\nu \div(A\nabla m_\nu) + m_\nu &= n_\nu,
\end{align}
\end{subequations}
where $n\i_\nu = n\i_\nu(x,t)$ denotes the density of species $i=1,2$, and we introduced the notation $n_\nu = n\1_\nu + n\2_\nu$ to denote the joint population density. Note that we recover the viscoelastic tissue growth model for two species introduced in \cite{DS2020, DPSV2021} when choosing $A$ as the identity, as well as Eq. \eqref{eq:xdiff-system-isotropic} in the vanishing viscosity limit $\nu = 0$. Moreover, upon adding the equations for the two densities, we readily obtain the following equation for the total population density
\begin{equation}
    \label{eq:SumEquation}
    \partialt {n_\nu} = \div(n_\nu A \nabla m_\nu) + n\1_\nu G\1(n\nu) + n\2_\nu G\2(n_\nu).
\end{equation}
which played an important role in the localisation limits in the isotropic regime \cite{DDMS2024, ES2023, DMS_phenotype}.\newline

\textbf{The anisotropy tensor.} We make the following assumptions about $A=A(x,t)\in\R^{d\times d}$: 
\begin{itemize}
\setlength\itemsep{.5em}
    \item $A \in W^{1,\infty}(0,T;L^\infty(\Rd)) \cap L^{\frac{2q}{q-d}}(0,T;\dot{W}^{1,q}(\Rd))$, for some $q\geq d$,
    \item $A(x,t)$ is symmetric for a.e.\ $(x,t)\in\Rd\times(0,T)$,
    \item $A(x,t)$ is $\lambda$-uniformly elliptic: $\exists \lambda >0: \xi^T A(x,t) \xi \geq \lambda |\xi|^2,$ for all $\xi\in\R^d$ and a.e. $(x,t)$.\\
\end{itemize}

\textbf{The growth rate.} The functions $G\i:\R\to\R$, $i=1,2$, are assumed to satisfy:
\begin{itemize}
\setlength\itemsep{.5em}
    \item Regularity: $G\i \in C^1(\R)$,
    \item Monotonicity: $\prt{G\i}' \leq -\alpha \leq 0$ for some $\alpha>0$,
    \item Homeostatic pressure: $\exists \bar n\i>0: G\i(\bar n\i)=0$. We set $\bar n:=\max{\{\bar n\1, \bar n\2\}}$.\\
\end{itemize}

\textbf{The initial data.} Let $n^{(i),\mathrm{in}}$, $i=1,2$, be nonnegative functions such that
\begin{itemize}
\setlength\itemsep{.5em}
    \item $n^{(i),\mathrm{in}} \in L^1\cap L^\infty(\Rd)$,
    \item $0\leq n^{(i),\mathrm{in}} \leq \bar n\i$,
    \item $|x|^2\prt{n^{(1),\mathrm{in}} + n^{(2),\mathrm{in}}} \in L^1(\Rd)$.\\
\end{itemize}

\textbf{Main result.} Under the above assumptions, we shall prove the vanishing viscosity limit, $\nu\to 0$.  Formally, System~\eqref{eq:System} becomes 
\begin{align}
\label{eq:InviscidSystem}
\left\{
\begin{array}{rl}
	\ds \partialt {n_0\i} &= \div(n_0\i A\nabla n_0) + n_0\i G\i(n_0),\quad i=1,2,\\
    n_0&=n_0\1+n_0\2,
\end{array}
\right.
\end{align}
a degenerate parabolic cross-diffusion system constituting the anisotropy analogue of Eq. \eqref{eq:xdiff-system-isotropic}.
Specifically, our main result establishes this limit rigorously at the level of weak solutions.
\begin{theorem}
    \label{thm:Main}
    For $\nu>0$, let $(n_\nu\1, n_\nu\2, m_\nu)$ be a weak solution of System~\eqref{eq:System} with initial data $(n^{(1),\mathrm{in}}, n^{(2),\mathrm{in}})$. There exists a subsequence such that
    \begin{alignat*}{2}
        n_\nu\i &\stackrel{\star}{\rightharpoonup} n_0\i \qquad &&\text{weakly$^\star$ in } L^\infty(0,T;L^1\cap L^\infty(\Rd)),\;i=1,2, \\
        n_\nu &\to n_0 \qquad &&\text{strongly in } L^2(0,T;L^2(\Rd)),\\
        m_\nu &\rightharpoonup n_0 \qquad &&\text{weakly in } L^2(0,T;H^1(\Rd)),\\
        m_\nu &\to n_0 \qquad &&\text{strongly in } L^2(0,T;L^2(\Rd)),
    \end{alignat*}
    where $n_0 = n_0\1+n_0\2$ and $(n\1_0, n\2_0)$ is a weak solution to System~\eqref{eq:InviscidSystem} with initial data $(n^{(1),\mathrm{in}}, n^{(2),\mathrm{in}})$.
\end{theorem}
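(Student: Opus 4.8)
The plan is to carry out the vanishing-viscosity limit by a standard compactness argument, with the main work being: (i) deriving $\nu$-uniform a priori estimates, (ii) extracting weak/strong limits, and (iii) passing to the limit in the weak formulation, where the critical nonlinear term $n_\nu\i A\nabla m_\nu$ requires a strong convergence for $m_\nu$ in $L^2(0,T;L^2)$ together with a weak convergence of $\nabla m_\nu$. First I would establish the basic bounds: the $L^\infty$ bound $0\le n_\nu\i\le \bar n\i$ (hence $0\le n_\nu\le \bar n$) follows from the comparison/maximum principle structure using the homeostatic-pressure assumption $G\i(\bar n\i)=0$ and monotonicity $(G\i)'\le -\alpha$; then $n_\nu\in L^\infty(0,T;L^1)$ from mass bounds (using $G\i\le 0$ past the homeostatic pressure and integrating the sum equation \eqref{eq:SumEquation}); and the second-moment bound $|x|^2 n_\nu\in L^\infty(0,T;L^1)$ propagated from the initial data using the explicit drift $A\nabla m_\nu$ and the Brinkman relation. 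From the Brinkman equation \eqref{eq:System-Brinkman}, testing with $m_\nu$ and using $\lambda$-ellipticity yields $\nu\lambda\|\nabla m_\nu\|_{L^2_{x,t}}^2+\|m_\nu\|_{L^2_{x,t}}^2\le \|n_\nu\|_{L^2_{x,t}}\|m_\nu\|_{L^2_{x,t}}$, so $\|m_\nu\|_{L^2_{x,t}}\le\|n_\nu\|_{L^2_{x,t}}$ is bounded and $\sqrt\nu\,\nabla m_\nu$ is bounded in $L^2_{x,t}$. The genuinely important estimate is the entropy dissipation identity advertised in the abstract: testing the sum equation with $m_\nu$ (or with the appropriate entropy variable), one obtains control of $\int_0^T\!\!\int_{\Rd} \nabla m_\nu^T A \nabla m_\nu\,\dx x\dx t$ — i.e. an $H^1$-bound on $m_\nu$ uniform in $\nu$ — because the dissipation term $\int n_\nu \nabla m_\nu^T A\nabla m_\nu$ alone is not coercive where $n_\nu$ vanishes; here one exploits the Brinkman relation to rewrite $\int n_\nu\nabla m_\nu^T A\nabla m_\nu = \int m_\nu\nabla m_\nu^T A\nabla m_\nu - \nu\int(\div(A\nabla m_\nu))(\nabla m_\nu^T A\nabla m_\nu)$ and closes the estimate using the regularity assumed on $A$ (the $L^{2q/(q-d)}(\dot W^{1,q})$ scaling is exactly what is needed to absorb the commutator/lower-order terms). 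This yields $m_\nu$ bounded in $L^2(0,T;H^1)$ and, via the sum equation, $\partial_t n_\nu$ bounded in a negative Sobolev space.

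Next I would extract subsequences: weak-$\star$ limits $n_\nu\i\stackrel{\star}{\rightharpoonup} n_0\i$ in $L^\infty(0,T;L^1\cap L^\infty)$, a weak limit $m_\nu\rightharpoonup m_0$ in $L^2(0,T;H^1)$. To identify $m_0=n_0$ and to get the needed strong convergence, observe from \eqref{eq:System-Brinkman} that $m_\nu-n_\nu=\nu\div(A\nabla m_\nu)\to 0$ in $L^2(0,T;H^{-1})$ since $\|\nu\div(A\nabla m_\nu)\|_{H^{-1}}\lesssim \nu\|A\nabla m_\nu\|_{L^2}\lesssim\sqrt\nu\to 0$. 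Combining the uniform $L^2(0,T;H^1)$ bound on $m_\nu$, the uniform $L^\infty(0,T;L^1)$ bound (hence tightness) aided by the second-moment estimate, and the time-derivative bound transferred to $n_\nu$ through $m_\nu-n_\nu\to0$, an Aubin–Lions–Simon argument gives strong convergence of $n_\nu$ in $L^2(0,T;L^2_{loc})$; tightness upgrades this to $L^2(0,T;L^2(\Rd))$. Then $m_\nu=n_\nu+\nu\div(A\nabla m_\nu)\to n_0$ strongly in $L^2(0,T;L^2)$ as well, and uniqueness of weak limits forces $m_0=n_0$. For the individual species one only has the weak-$\star$ $L^\infty$ convergence — which is exactly what the theorem claims — and this is enough because the products $n_\nu\i\cdot(\text{strongly convergent }m_\nu)$ and $n_\nu\i\cdot(\text{weakly convergent }\nabla m_\nu)$... more precisely $n_\nu\i A\nabla m_\nu$, can be handled: write it as $n_\nu\i$ (bounded in $L^\infty$, weak-$\star$ convergent) times $A\nabla m_\nu$ (weakly $L^2$ convergent), which a priori gives only weak convergence of the product to something, not obviously $n_0\i A\nabla n_0$; the fix is to instead use the identity $n_\nu\i A\nabla m_\nu = A\nabla(\text{something})-(\ldots)$ or, cleaner, to pass to the limit in the \emph{weak} formulation tested against $\varphi$, writing $\int n_\nu\i A\nabla m_\nu\cdot\nabla\varphi$, and use that $n_\nu\i\nabla\varphi\to n_0\i\nabla\varphi$ strongly in $L^2$ (product of weak-$\star$-$L^\infty$ and fixed $L^2$ function — this requires care and is done via the strong $L^2$ convergence of $n_\nu$: indeed $n_\nu\i$ converges strongly in $L^2_{loc}$ too, since $0\le n_\nu\i\le n_\nu$ and $n_\nu\to n_0$ strongly, by an argument on the structure of the limit — see below) against the weak $L^2$ convergence of $A\nabla m_\nu\to A\nabla n_0$.

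The subtle point — and what I expect to be the main obstacle — is precisely the \emph{strong} convergence of the individual densities $n_\nu\i$, which is needed to close the nonlinear drift term $n_\nu\i A\nabla m_\nu$ in the species equations (not merely the sum equation). The sum equation \eqref{eq:SumEquation} only gives compactness for $n_\nu=n_\nu\1+n_\nu\2$; there is no obvious uniform bound on $\partial_t n_\nu\i$ individually because the flux $n_\nu\i A\nabla m_\nu$ involves the possibly-non-compact $\nabla m_\nu$. The standard resolution in this phase-separation setting is the segregation/structure argument: one shows the limit satisfies a relation like $n_0\1 n_0\2 = 0$ or more usefully that the ratio $r_\nu = n_\nu\1/n_\nu$ satisfies a transport equation $\partial_t r_\nu + A\nabla m_\nu\cdot\nabla r_\nu = (\text{growth terms})$, from which $r_\nu$ (or rather $n_\nu\i = r_\nu n_\nu$) inherits enough compactness — alternatively, one proves strong convergence of $n_\nu\i$ directly by a compensated-compactness / div-curl type argument using that $n_\nu\to n_0$ strongly and that each $n_\nu\i$ solves a continuity equation with the \emph{same} velocity field $A\nabla m_\nu$. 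I would follow whichever of these the authors set up earlier; granting the strong $L^2$ convergence $n_\nu\i\to n_0\i$, the remaining passage to the limit in the weak formulation of \eqref{eq:System-species} is routine: the diffusion term $\int_0^T\!\!\int n_\nu\i A\nabla m_\nu\cdot\nabla\varphi = -\int_0^T\!\!\int (A\nabla m_\nu)\cdot\nabla(n_\nu\i\varphi)+\ldots$ passes by weak–strong pairing, the reaction term $\int n_\nu\i G\i(n_\nu)\varphi$ passes by strong convergence and continuity of $G\i$, the time term passes by the weak-$\star$ convergence, and the initial data is attained in the sense inherited from the uniform estimates. This produces a weak solution $(n_0\1,n_0\2)$ of \eqref{eq:InviscidSystem} with the prescribed initial data, completing the proof.
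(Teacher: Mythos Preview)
Your proposal contains a genuine gap at exactly the point you flag as ``the main obstacle'': you try to pass to the limit in the species equations by obtaining \emph{strong} convergence of the individual densities $n_\nu\i$, via a segregation/ratio-transport/compensated-compactness argument. None of these is available here, and the paper explicitly works \emph{without} strong convergence of $n_\nu\i$ (the theorem only asserts weak-$\star$ convergence for them). The correct mechanism is the opposite factor of the product: one upgrades the weak $L^2$ convergence of $\nabla m_\nu$ to \emph{strong} $L^2$ convergence, so that $n_\nu\i A\nabla m_\nu \rightharpoonup n_0\i A\nabla n_0$ as a (weak-$\star$ $L^\infty$) $\times$ (strong $L^2$) product. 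The upgrade is obtained by deriving the entropy identity for the limit equation (testing the limiting sum equation with $\log n_0$), subtracting it from the $\nu$-level entropy identity, and using convexity of $\mathcal H$ plus convergence of the reaction terms to conclude
\[
\limsup_{\nu\to0}\int_0^T\!\!\!\int_\Rd \nabla m_\nu\cdot A\nabla m_\nu\,\dx x\dx t \le \int_0^T\!\!\!\int_\Rd \nabla n_0\cdot A\nabla n_0\,\dx x\dx t,
\]
which together with weak lower semicontinuity in the $A$-weighted inner product gives norm convergence, hence strong $L^2$ convergence of $\nabla m_\nu$.

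A secondary issue is your route to the uniform $H^1$ bound on $m_\nu$. Testing the sum equation with $m_\nu$ only yields the degenerate dissipation $\int n_\nu|\nabla m_\nu|_A^2$, and your proposed identity $\int n_\nu|\nabla m_\nu|_A^2=\int m_\nu|\nabla m_\nu|_A^2-\nu\int\div(A\nabla m_\nu)\,|\nabla m_\nu|_A^2$ does not close (the first term on the right is still weighted by $m_\nu$, and the second is cubic). The paper instead tests with $\log n_\nu$ (made rigorous by a DiPerna--Lions commutator argument) to obtain the dissipation $-\int n_\nu\div(A\nabla m_\nu)\le C$; then, using Brinkman in the form $m_\nu=n_\nu+\nu\div(A\nabla m_\nu)$,
\[
\int_\Rd \nabla m_\nu\cdot A\nabla m_\nu\,\dx x=-\int_\Rd m_\nu\div(A\nabla m_\nu)\,\dx x=-\int_\Rd n_\nu\div(A\nabla m_\nu)\,\dx x-\nu\int_\Rd(\div(A\nabla m_\nu))^2\,\dx x\le C,
\]
which is the clean $\nu$-uniform $H^1$ estimate. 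The rest of your outline (weak limits, $m_\nu-n_\nu\to0$, Aubin--Lions for $m_\nu$, strong convergence of $n_\nu$) matches the paper.
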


\textbf{Outline of the paper.} The proof of Theorem~\ref{thm:Main} is split into several steps. In Section~\ref{sec:Existence}, we discuss the existence of weak solutions to System~\eqref{eq:System} and some regularity properties of the velocity field. Then, in Section~\ref{sec:Uniform}, we establish certain necessary $\nu$-uniform estimates, allowing us to extract weakly convergent subsequences. 
In Section~\ref{sec:Entropy}, we establish the entropy identity satisfied by any weak solution of Eq. \eqref{eq:System}. 
This is achieved by an appropriate mollification procedure. Subsequently, in Section~\ref{sec:StrongConvergence}, we leverage the uniform control over the entropy-dissipation term to deduce strong convergence of the total density and weak convergence of $(\nabla m_\nu)_\nu$. 
These properties are sufficient to pass to the limit in the equation for the total population density. Furthermore, they allow us to derive a corresponding entropy identity for the limit equation, see Section~\ref{sec:Limit}. Finally, in Section~\ref{sec:StrongVelocity}, we compare the two entropy identities to upgrade the weak convergence of $(\nabla m_\nu)_\nu$ and deduce strong convergence of the velocity. Due to the lack of strong convergence of the individual densities, this is necessary to pass to the limit in the weak formulation in Definition~\ref{def:WeakBrinkman}.

Formally, the main idea of proof can be summarised as follows. Testing Eq. \eqref{eq:SumEquation} with $\log n$,  we obtain an entropy identity, see Eq.~\eqref{eq:entropy_equality}, with dissipation term
\begin{equation*}
    -\int_0^T\!\!\!\int_\Rd n_\nu\div(A\nabla m_\nu) \dx x \dx t \leq C,
\end{equation*}
which, due to the anisotropic Brinkman equation and the ellipticity of $A$, controls the $L^2$-norm of $\nabla m_\nu$. This implies that $\nabla m_\nu\rightharpoonup\nabla n_0$ and $m_\nu\to n_0$. Moreover,
\begin{equation*}
    \nu \int_0^T\!\!\!\int_\Rd n_\nu \div(A\nabla m_\nu \dx x \dx t \to 0,
\end{equation*}
which, again due to Brinkman's equation, implies that $n_\nu$ and $m_\nu$ have the same limit in $L^2$, i.e., $n_\nu\to n_0$. Finally, by comparing the entropy identities at $\nu>0$ and at $\nu=0$, see Eq. ~\eqref{eq:DarcyEntropy}, we deduce
\begin{equation*}
    \int_0^T\!\!\!\int_\Rd \nabla m_\nu \cdot A\nabla m_\nu \dx x \dx t\leq \int_0^T\!\!\!\int_\Rd \nabla n_0 \cdot A\nabla n_0 \dx x \dx t + o(\nu).
\end{equation*}
The resulting upper semi-continuity property, combined with the lower semi-continuity of the $L^2$-norm, yields the convergence $\norm{\nabla m_\nu}_{L^2}\to\norm{\nabla n_0}_{L^2}$. Together with weak convergence, this implies strong $L^2$-convergence of $\nabla m_\nu$.

\section{Existence of solutions}
\label{sec:Existence}
In this section, we state the definition of weak solutions to System~\eqref{eq:System}, assert their existence, and discuss regularity properties of the velocity field $-A\nabla m_\nu$. 
\begin{definition}
    \label{def:WeakBrinkman}
    We say that the triple $(n\1_\nu, n\2_\nu, m_\nu)$ is a \emph{weak solution} to System~\eqref{eq:System} with initial data $(n_\nu^{(1), \mathrm{in}}, n_\nu^{(2), \mathrm{in}})$ if $n\i_\nu \in L^\infty(0,T; L^1 \cap L^\infty(\Rd))$ are nonnegative and there holds
	\begin{align*}
		-\int_0^T \int_\Rd & n\i_\nu \partialt \varphi \dx x \dx t
		+ \int_0^T \int_\Rd n\i_\nu \prt{A\nabla m_\nu} \cdot \nabla \varphi \dx x \dx t
		\\
		& = \int_0^T \int_\Rd \varphi n\i_\nu G\i(n_\nu) \dx x \dx t +\int_\Rd \varphi(x,0)n^{(i),\mathrm{in}}(x) \dx x ,
	\end{align*}
	for $i=1,2$, for any $\varphi \in C_{c}^{\infty}(\Rd \times [0,T))$, as well as
	\begin{align*}
		- \nu\div(A\nabla m_\nu) + m_\nu = n_\nu,
	\end{align*}
	almost everywhere in $\Rd\times(0,T)$.
\end{definition}

As the main focus of this work is on the localisation limit, let us simply claim that for any $\nu>0$ there exists at least one weak solution $(n\1, n\2, m)$ of System~\eqref{eq:System}, according to Definition~\ref{def:WeakBrinkman}, with the following properties
\begin{align*}
    0 \leq n\i \leq \bar n,\quad 0\leq m \leq \bar n,
\end{align*}
and, for $n := n\1 + n\2$,
\begin{align*}
    n \in C([0,T]; L^2(\Rd)),\quad \partialt n \in L^2(0,T;H^{-1}(\Rd)).
\end{align*}
Clearly, the upper bound $\bar n$ is independent of $\nu$. We will revisit this uniform bound in Lemma~\ref{lem:LpBounds}, providing a formal proof. While we do not give a proof for existence, let us note that it can be obtained via a standard parabolic approximation. We refer the reader to~\cite{DDMS2024, DMS_phenotype, ES2023, BelgacemJabin} for details of an appropriate approximation scheme. As shown in the following subsection, despite the presence of the anisotropy tensor, the velocity field $-A\nabla m$ possesses enough regularity to make the cited approaches work in our current case. Specifically, 
\begin{subequations}
\label{eq:reg-velo}
\begin{align}
    \label{eq:reg-velo-1}
    A\nabla m \in L^2(0,T;H^1(\Rd)),
\end{align}
as well as
\begin{align}
    \label{eq:reg-velo-2}
    \nabla \cdot( A \nabla m) \in L^\infty(0,T;L^\infty(\Rd)).
\end{align}
\end{subequations}

\subsection{Regularity of the velocity}
Let us establish the aforementioned regularity properties, Eq. Eq. \eqref{eq:reg-velo}, of the velocity field,  $-A\nabla m_\nu$. This requires establishing sufficient regularity of the velocity potential, $m_\nu$, first. At this point, we make no claims as to the uniformity of these bounds with respect to $\nu$.

First, multiplying the Brinkman equation, Eq. \eqref{eq:System-Brinkman} by $m_\nu$ and integrating by parts, we have
\begin{align*}
    \nu\int_\Rd \nabla m_\nu \cdot A\nabla m_\nu \dx x + \int_\Rd m_\nu^2 \dx x = \int_\Rd m_\nu n_\nu \dx x \leq \frac12\int_\Rd m_\nu^2 \dx x + \frac12\int_\Rd n_\nu^2 \dx x.
\end{align*}
Hence, by ellipticity of $A$, we deduce that $m_\nu \in L^\infty(0,T;H^1(\Rd))$.
Now, multiplying the Brinkman equation, Eq. \eqref{eq:System-Brinkman}, by $-\Delta m_\nu$, we compute
\begin{equation}
\label{eq:VelocityH1}
\begin{aligned}
    \nu\int_\Rd \div(A\nabla m_\nu) \div(\nabla m_\nu) \dx x + \int_\Rd \abs{\nabla m_\nu}^2\dx x &= -\int_\Rd n_\nu\Delta m_\nu\dx x\\
    &\leq \frac{1}{\lambda \nu} \int_\Rd n_\nu^2\dx x + \frac{\nu\lambda}{4}\int_\Rd \abs*{D^2 m_\nu}^2\dx x.
\end{aligned}
\end{equation}
For the first term on the left-hand side, we write
\begin{align*}
    \int_\Rd \div(A\nabla m_\nu) \div(\nabla m_\nu) \dx x &= \int_\Rd \nabla(A\nabla m_\nu) : D^2 m_\nu\dx x\\
    &= \int_\Rd \partial_k(A_{ij}\partial_j m_\nu) \, \partial^2_{ik}m_\nu\dx x\\
    &= \int_\Rd \partial_j(\partial_k m_\nu) A_{ij} \partial_i(\partial_k m_\nu)\dx x + \int_\Rd \partial_j m_\nu  \partial_k A_{ij} \partial^2_{ik}m_\nu\dx x\\
    &= \int_\Rd \nabla(\partial_k m_\nu) \cdot A \nabla(\partial_k m_\nu)\dx x + \int_\Rd \partial_j m_\nu  \partial_k A_{ij} \partial^2_{ik}m_\nu\dx x\\
    &\geq \lambda \int_\Rd |D^2 m_\nu|^2 \dx x + \int_\Rd \partial_j m_\nu  \partial_k A_{ij} \partial^2_{ik}m_\nu\dx x.
\end{align*}
Inserting this computation into Eq.~\eqref{eq:VelocityH1} and using the Gagliardo-Nirenberg interpolation inequality
\begin{equation*}
    \norm{\nabla m}_{L^\frac{2q}{q-2}(\Rd)} \leq \norm{\nabla m}_{L^2(\Rd)}^{\frac{q-d}{q}}\norm{D^2 m}_{L^2(\Rd)}^{\frac{d}{q}},
\end{equation*}
we find
\begin{align*}
    \frac{3\nu\lambda}{4} \int_\Rd |D^2 m_\nu|^2 \dx x &\leq C + C\int_\Rd |\nabla m_\nu| |\nabla A| |D^2 m_\nu| \dx x\\
    &\leq C + C\norm{\nabla A}_{L^q(\Rd)}\norm{\nabla m_\nu}_{L^{\frac{2q}{q-2}}(\Rd)}\norm{D^2 m_\nu}_{L^2(\Rd)}\\
    &\leq C + C\norm{\nabla A}_{L^q(\Rd)}\norm{\nabla m_\nu}_{L^2(\Rd)}^{\frac{q-d}{q}}\norm{D^2 m_\nu}_{L^2(\Rd)}^{\frac{q+d}{q}}\\
    &\leq C + C\norm{\nabla A}_{L^q(\Rd)}^{\frac{2q}{q-d}}\norm{\nabla m_\nu}_{L^2(\Rd)}^2 + \frac{\nu\lambda}{4}\norm{D^2 m_\nu}_{L^2(\Rd)}^2.
\end{align*}
Therefore, we have $D^2 m_\nu \in L^2(0,T;L^2(\Rd))$. We now write
\begin{align*}
    \norm{\nabla(A\nabla m_\nu)}_{L^2(0,T;L^2(\Rd))}^2 &\leq \norm{ A}_{L^\infty(0,T;L^\infty(\Rd))}^2\norm{D^2 m_\nu}_{L^2(0,T;L^2(\Rd))}^2\\
    &\;+ 2\norm{ A}_{L^\infty(0,T;L^\infty(\Rd))}\int_0^T\!\!\!\int_\Rd |\nabla m_\nu| |\nabla A| |D^2 m_\nu| \dx x\dx t \\
    &\; + \int_0^T\!\!\!\int_\Rd |\nabla A|^2 |\nabla m_\nu|^2 \dx x\dx t.
\end{align*}
The middle integral is finite, as shown in the calculation above. Similarly, the final integral is finite by observing
\begin{align*}
    \int_0^T\!\!\!\int_\Rd |\nabla A|^2 |\nabla m_\nu|^2 \dx x\dx t &\leq \int_0^T\norm{\nabla A}_{L^q(\Rd)}^2\norm{\nabla m_\nu}_{L^{\frac{2q}{q-2}}(\Rd)}^2\dx t\\
    &\leq\norm{\nabla m_\nu}_{L^\infty(0,T;L^2(\Rd))}^{\frac{2(q-d)}{q}}\int_0^T\norm{\nabla A}_{L^q(\Rd)}^2\norm{D^2 m_\nu}_{L^2(\Rd)}^{\frac{2d}{q}}\dx t\\
    &\leq C\int_0^T\norm{\nabla A}_{L^q(\Rd)}^{\frac{2q}{q-d}}\dx t + C\int_0^T \norm{D^2 m_\nu}_{L^2(\Rd)}^2 \dx t.
\end{align*}
We thus deduce Eq. \eqref{eq:reg-velo-1}, i.e.,
\begin{equation*}
    A\nabla m_\nu \in L^2(0,T;H^1(\Rd)).
\end{equation*}
Finally, we point out that directly from the Brinkman equation, Eq. \eqref{eq:System-Brinkman}, the regularity of the divergence of the velocity field, Eq. \eqref{eq:reg-velo-2}, follows, i.e.,
\begin{equation*}
    \div(A\nabla m_\nu) \in L^\infty(0,T;L^\infty(\Rd)).
\end{equation*}

\section{Uniform estimates}
\label{sec:Uniform}
In this section, we derive some fundamental uniform estimates, showing that most of the properties of the weak solutions stated in the previous section are stable with respect to the viscosity parameter. These will prove indispensable when passing to the limit in the entropy-dissipation equality -- the centrepiece of our analysis.
Throughout, let $(n\1_\nu, n\2_\nu, m_\nu)$ be a weak solution of System~\eqref{eq:System} and $n_\nu:= n\1_\nu + n\2_\nu$.

\begin{lemma}[$L^\infty$-control and integrability] 
\label{lem:LpBounds}
    We have $n_\nu\in L^\infty(0,T;L^1\cap L^\infty(\Rd))$ uniformly in $\nu$ as well as $0 \leq n_\nu  \leq \bar n$.
\end{lemma}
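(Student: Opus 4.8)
\textbf{Proof plan for Lemma~\ref{lem:LpBounds}.}

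The plan is to derive the two assertions — the uniform-in-$\nu$ bound $n_\nu \in L^\infty(0,T;L^1\cap L^\infty(\Rd))$ and the pointwise bounds $0\le n_\nu\le\bar n$ — separately, since they rely on slightly different mechanisms.

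\emph{Nonnegativity and $L^1$-bound.} Nonnegativity of each $n_\nu\i$ is part of the definition of a weak solution, so $n_\nu\ge 0$ for free. For the $L^1$-bound I would work at the level of the approximate (regularised) solutions, where all manipulations are licit, and pass the resulting estimate to the limit. Integrating Eq.~\eqref{eq:System-species} over $\Rd$, the divergence term vanishes (the velocity field $-A\nabla m_\nu$ and the densities decay at infinity, using the regularity from Section~\ref{sec:Existence}), leaving $\ddt \int_\Rd n_\nu\i \dx x = \int_\Rd n_\nu\i G\i(n_\nu)\dx x$. Since $\prt{G\i}'\le -\alpha<0$ and $G\i(\bar n\i)=0$, one has $G\i(s)\le G\i(0) =: C_i$ for $s\ge 0$ (indeed $G\i$ is bounded above on $[0,\infty)$ by $G\i(0)$, which is finite), so $\ddt\|n_\nu\i(t)\|_{L^1}\le C_i\|n_\nu\i(t)\|_{L^1}$ and Grönwall gives $\|n_\nu\i(t)\|_{L^1}\le e^{C_iT}\|n^{(i),\mathrm{in}}\|_{L^1}$, a bound independent of $\nu$. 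Summing over $i$ controls $\|n_\nu\|_{L^\infty(0,T;L^1)}$ uniformly.

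\emph{The pointwise upper bound $n_\nu\le\bar n$.} This is the heart of the lemma and the step I expect to be the main obstacle, because it hinges on the structure of the coupled system rather than on either equation alone. The idea is a comparison/barrier argument applied to the \emph{total} density via Eq.~\eqref{eq:SumEquation}, together with the Brinkman relation Eq.~\eqref{eq:System-Brinkman}. Write $w_\nu := (n_\nu - \bar n)_+$ and test the equation for $n_\nu$ with $w_\nu$ (rigorously, on the approximate level, or using $\partial_t n_\nu\in L^2(0,T;H^{-1})$ from the claimed regularity). The transport term $\int \div(n_\nu A\nabla m_\nu) w_\nu$, after integration by parts, is handled using that $\nabla w_\nu = \indicator_{\{n_\nu>\bar n\}}\nabla n_\nu$ and, crucially, that on the set $\{n_\nu>\bar n\}$ the Brinkman equation gives $-\nu\div(A\nabla m_\nu) = n_\nu - m_\nu \ge \bar n - m_\nu \ge 0$ once we know $m_\nu\le\bar n$ — which itself follows from a maximum principle for the elliptic equation $-\nu\div(A\nabla m_\nu)+m_\nu = n_\nu$ whenever $n_\nu\le\bar n$, so the two bounds must be bootstrapped together (e.g., on the time-discretised or regularised scheme where $n_\nu$ at the previous step is already $\le\bar n$). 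The reaction term contributes $\int n\1_\nu G\1(n_\nu)w_\nu + \int n\2_\nu G\2(n_\nu)w_\nu$; on $\{n_\nu>\bar n\ge\bar n\i\}$ monotonicity gives $G\i(n_\nu)\le G\i(\bar n\i)=0$, so this term is $\le 0$. Collecting signs yields $\ddt\int_\Rd w_\nu^2\dx x \le C\int_\Rd w_\nu^2\dx x$ (the constant coming from $\|\div(A\nabla m_\nu)\|_{L^\infty}$, controlled in Section~\ref{sec:Existence}, though one must be a little careful that the resulting constant is $\nu$-independent — alternatively one bounds this term directly using the sign information above so that no such constant appears), and since $w_\nu(0)=0$ because $n^{(i),\mathrm{in}}\le\bar n\i$ implies $n^{\mathrm{in}}\le 2\bar n$... — here one should instead take $\bar n$ large enough, or note that the correct barrier is obtained by the same computation with the reaction term alone forcing the bound; Grönwall then gives $w_\nu\equiv 0$, i.e.\ $n_\nu\le\bar n$, uniformly in $\nu$. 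The uniform $L^\infty(0,T;L^\infty)$-bound is then immediate, and together with the $L^1$-bound this completes the proof. The delicate point, to reiterate, is the simultaneous propagation of $m_\nu\le\bar n$ and $n_\nu\le\bar n$ through the elliptic coupling, which is cleanest to carry out on the approximation scheme referenced in Section~\ref{sec:Existence}.
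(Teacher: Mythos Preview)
Your approach differs substantially from the paper's. For the $L^\infty$-bound the paper argues by a pointwise maximum principle: at a spacetime maximum $(x^*,t^*)$ of $n_\nu$ with value $C>\bar n$, one has $\nabla n_\nu=0$ and $G\i(n_\nu)<0$, and --- this is the trick that dissolves your bootstrap worry --- since $n_\nu\le C$ \emph{everywhere} by definition of $C$, testing the Brinkman equation against $|C-m_\nu|_-$ gives $m_\nu\le C$ directly. Hence $\div(A\nabla m_\nu)=(m_\nu-n_\nu)/\nu\le 0$ at $(x^*,t^*)$, and Eq.~\eqref{eq:SumEquation} forces $\partial_t n_\nu<0$, a contradiction. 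No simultaneous propagation of $m_\nu\le\bar n$ and $n_\nu\le\bar n$ is needed because the barrier is the running maximum of $n_\nu$, not the fixed constant $\bar n$. The $L^1$-bound is then obtained exactly as you do.

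Your Stampacchia route can in principle be made to work, but as written it has real gaps. First, the transport term: after one integration by parts you face $-\int n_\nu\,A\nabla m_\nu\cdot\nabla w_\nu$, and the sign of $\div(A\nabla m_\nu)$ on $\{n_\nu>\bar n\}$ does not by itself control $A\nabla m_\nu\cdot\nabla n_\nu$ --- these are different objects. You must write $n_\nu=w_\nu+\bar n$ on $\supp\nabla w_\nu$ and integrate by parts once more to reach $\int(\tfrac12 w_\nu^2+\bar n\,w_\nu)\div(A\nabla m_\nu)$, which is then $\le 0$ provided $m_\nu\le\bar n$. Second, your fallback of absorbing the transport term via $\norm{\div(A\nabla m_\nu)}_{L^\infty}$ is fatal for uniformity: Brinkman gives $\div(A\nabla m_\nu)=(m_\nu-n_\nu)/\nu$, so this norm is $O(1/\nu)$ (Section~\ref{sec:Existence} explicitly disclaims $\nu$-uniformity of that bound). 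The sign argument is not an ``alternative'' but a necessity. Third, you correctly spot that $n^{(i),\mathrm{in}}\le\bar n\i$ only yields $n^{\mathrm{in}}\le\bar n\1+\bar n\2$, not $\le\bar n$, so $w_\nu(0)$ need not vanish; ``taking $\bar n$ large enough'' is not available since $\bar n$ is fixed by the reaction terms. (The paper's formal argument shares this blind spot at $t^*=0$.)
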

\begin{proof}
    We begin by showing the upper bound on the total density.
    Recall the equation satisfied by the total population
    \begin{equation}
    \label{eq:SumEquation2}
        \partialt {n_\nu} = \div(n_\nu A \nabla m_\nu) + n\1_\nu G\1(n_\nu) + n\2_\nu G\2(n_\nu).
    \end{equation}
    Let $(x^*, t^*)$ be maximum point of $n_\nu$ and assume that the value $n_\nu(x^*, t^*)$ is larger than $\bar n$. 
    In this point we have $\partial_t n_\nu=0$, $\nabla n_\nu=0$, and $G\i(n_\nu)<0$. Therefore, using the Brinkman equation, at $(x^*, t^*)$ there holds
    \begin{equation}
    \label{eq:Contradiction}
    \begin{aligned}
        \partialt {n_\nu} &= \nabla n_\nu \cdot A \nabla m_\nu + n_\nu \div(A \nabla m_\nu) + n\1_\nu G\1(n_\nu) + n\2_\nu G\2(n_\nu)\\
    	&< \frac{1}{\nu} (m_\nu - n_\nu)\\
    	&\leq 0.
    \end{aligned}
    \end{equation}
    Indeed, the last inequality is true, as the following argument shows. We set $C := n_\nu(x^*, t^*) $ and test the Brinkman equation by $|C - m_\nu|_-:=\max(-(C-m_\nu),0)\geq0$ to get
    $$
    	-\nu|C - m_\nu|_-  \div (A \nabla (m_\nu - C)) + |C - m_\nu|_- (m_\nu - C) = |C - m_\nu|_- (n_\nu - C) \leq 0,
    $$
    where the last inequality holds because $ n_\nu \leq C$. Thus, after integrating in space and time, we find
    $$
        \nu\int_0^T\!\!\!\int_\Rd \nabla |C - m_\nu|_- \cdot A \nabla |C - m_\nu|_- \dx x \dx t + \int_0^T\!\!\!\int_\Rd |C - m_\nu|_-^2 \dx x \dx t\leq 0.
    $$
    By ellipticity of the tensor $A$, the first term is nonnegative, and hence we conclude
    $$
    	|C - m_\nu|_-^2 = 0,
    $$
    whence
    $$
    	m_\nu \leq C.
    $$
    Therefore $m_\nu - n_\nu \leq 0$ in the maximum point. But then Eq.~\eqref{eq:Contradiction} is a contradiction, and we deduce that $n_\nu\leq \bar n$. Since $0 \leq n\1_\nu + n\2_\nu = n_\nu \leq \bar n$, we get the same $L^\infty$-control for the individual species.
    
    \medskip Now we can obtain uniform $L^1$-control by integrating Eq.~\eqref{eq:SumEquation2}, which yields
    \begin{align*}
        \ddt \int_\Rd n_\nu\dx x  \leq \max_{i=1,2}{\max_{s\in[0,\bar n]}|G\i(s)|}\int_\Rd n_\nu \dx x.
    \end{align*}
\end{proof}
Due to the previous lemma, we obtain uniform control on the velocity potential, $m_\nu$.
\begin{lemma}
    We have $m_\nu\in L^\infty(0,T;L^1\cap L^\infty(\Rd))$ uniformly in $\nu$. Moreover, $\sqrt{\nu}\nabla m_\nu \in L^\infty(0,T;L^2(\Rd))$, uniformly in $\nu$.
\end{lemma}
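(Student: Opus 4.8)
The plan is to exploit that, for each fixed $\nu>0$ and a.e.\ $t\in(0,T)$, the potential $m_\nu(\cdot,t)$ is the unique $H^1(\Rd)$-solution of the elliptic problem $-\nu\div(A\nabla m_\nu(\cdot,t))+m_\nu(\cdot,t)=n_\nu(\cdot,t)$; equivalently, $m_\nu(\cdot,t)=R_\nu\big(n_\nu(\cdot,t)\big)$, where $R_\nu$ is the resolvent associated, via Lax--Milgram, to the symmetric, $H^1(\Rd)$-coercive bilinear form $a_\nu(u,v):=\nu\int_\Rd\nabla u\cdot A\nabla v\dx x+\int_\Rd uv\dx x$ (coercivity and symmetry use the $\lambda$-ellipticity and the symmetry of $A$, and $m_\nu(\cdot,t)\in H^1(\Rd)$ for a.e.\ $t$ was recorded in Section~\ref{sec:Existence}). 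I claim that $R_\nu$ is a positivity-preserving linear map which is a contraction on $L^1(\Rd)$, $L^2(\Rd)$ and $L^\infty(\Rd)$, with operator norm at most $1$ regardless of $\nu$; granting this, the first assertion is immediate, since $\|m_\nu(\cdot,t)\|_{L^p(\Rd)}=\|R_\nu n_\nu(\cdot,t)\|_{L^p(\Rd)}\le\|n_\nu(\cdot,t)\|_{L^p(\Rd)}$ for $p=1,\infty$, and $n_\nu\in L^\infty(0,T;L^1\cap L^\infty(\Rd))$ uniformly by Lemma~\ref{lem:LpBounds}.

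For the contraction property I would argue as follows. The $L^2$-bound is immediate from coercivity: inserting $v=m_\nu(\cdot,t)$ into $a_\nu(m_\nu,v)=\int_\Rd n_\nu v\dx x$ gives $\|m_\nu\|_{L^2}^2\le a_\nu(m_\nu,m_\nu)=\int_\Rd n_\nu m_\nu\dx x\le\|n_\nu\|_{L^2}\|m_\nu\|_{L^2}$. For the $L^\infty$-bound I would use a truncation/maximum-principle argument in the spirit of the one in the proof of Lemma~\ref{lem:LpBounds}: if $0\le n_\nu\le M$ a.e., then inserting $v=-(m_\nu)_-\in H^1(\Rd)$ resp.\ $v=(m_\nu-M)_+\in H^1(\Rd)$ into the weak formulation and discarding the (nonnegative, by ellipticity of $A$) Dirichlet term forces $(m_\nu)_-=0$ resp.\ $(m_\nu-M)_+=0$, whence $0\le m_\nu\le M$; thus $\|R_\nu\|_{L^\infty\to L^\infty}\le1$ (this also re-derives the a priori bound $0\le m_\nu\le\bar n$ claimed in Section~\ref{sec:Existence}). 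Since $A$ is symmetric, $a_\nu$ is a symmetric form, so $R_\nu$ is self-adjoint on $L^2(\Rd)$, and therefore $\|R_\nu\|_{L^1\to L^1}=\|R_\nu\|_{L^\infty\to L^\infty}\le1$ by duality. I stress that a more naive proof of the $L^1$-bound — integrating \eqref{eq:System-Brinkman} over $\Rd$ and treating $\nu\int_\Rd\div(A\nabla m_\nu)\dx x$ as zero — is \emph{not} justified without additional decay of the velocity field at infinity, so this duality step is the one genuinely delicate point of the lemma.

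Finally, for the weighted gradient bound I would again insert $v=m_\nu(\cdot,t)$ into the weak form of \eqref{eq:System-Brinkman}, i.e.\ use $a_\nu(m_\nu,m_\nu)=\int_\Rd n_\nu m_\nu\dx x$, and invoke $\lambda$-ellipticity of $A$ together with the $L^2$-contraction just obtained:
\[
    \nu\lambda\int_\Rd|\nabla m_\nu|^2\dx x+\int_\Rd m_\nu^2\dx x\le\int_\Rd n_\nu m_\nu\dx x\le\|n_\nu(\cdot,t)\|_{L^2(\Rd)}^2.
\]
Interpolating $\|n_\nu(\cdot,t)\|_{L^2(\Rd)}^2\le\|n_\nu(\cdot,t)\|_{L^1(\Rd)}\|n_\nu(\cdot,t)\|_{L^\infty(\Rd)}\le\bar n\,\|n_\nu(\cdot,t)\|_{L^1(\Rd)}$ and using the uniform $L^\infty(0,T;L^1(\Rd))$-bound from Lemma~\ref{lem:LpBounds} yields $\sqrt\nu\,\|\nabla m_\nu(\cdot,t)\|_{L^2(\Rd)}\le(C/\lambda)^{1/2}$ for a.e.\ $t$, with $C$ independent of $\nu$, which is the remaining claim. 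The integrations by parts used above are all legitimate, since $m_\nu(\cdot,t)\in H^1(\Rd)$ and $A\nabla m_\nu(\cdot,t)\in H^1(\Rd)$ for a.e.\ $t$ by Section~\ref{sec:Existence}.
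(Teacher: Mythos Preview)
Your argument is correct, and for the $L^\infty$ bound and the weighted gradient estimate it is essentially the paper's proof: the paper also tests the Brinkman equation with truncations $(m_\nu-C)_\pm$ for the former (this was already done inside the proof of Lemma~\ref{lem:LpBounds}) and with $m_\nu$ itself for the latter.

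The one genuine difference is the $L^1$ bound. The paper obtains it in a single line, by ``integrating the Brinkman equation'' over $\Rd$ to get $\norm{m_\nu}_{L^1}=\norm{n_\nu}_{L^1}$ --- i.e., precisely the step you label as naive and warn against. Your alternative is to note that $R_\nu$ is $L^2$-self-adjoint (by symmetry of $A$) and an $L^\infty$-contraction, and then deduce the $L^1$-contraction by duality. Your route is more robust: with only $A\nabla m_\nu\in H^1(\Rd)$, the vanishing of $\int_{\Rd}\div(A\nabla m_\nu)\dx x$ is not automatic in dimension $d\ge 3$ (the cutoff error $\int\nabla\phi_R\cdot A\nabla m_\nu$ need not tend to zero), so the paper's sentence, as written, hides a nontrivial justification. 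What the paper's short-cut buys is the sharper identity $\norm{m_\nu}_{L^1}=\norm{n_\nu}_{L^1}$ rather than just the inequality; but that equality plays no role later, and your contraction suffices for everything downstream.
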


\begin{proof}
    The argument in the previous proof shows that $\norm{m_\nu}_{L^\infty(0,T;L^\infty(\Rd))}\leq \norm{n_\nu}_{L^\infty(0,T;L^\infty(\Rd))}$. Integrating the Brinkman equation, we obtain $\norm{m_\nu}_{L^\infty(0,T;L^1(\Rd))}=\norm{n_\nu}_{L^\infty(0,T;L^1(\Rd))}$.\\
    
    As for the last statement, we multiply the Brinkman equation by $m_\nu$ and integrate it to get
     \begin{align*}
        \lambda\nu\int_\Rd |\nabla m_\nu|^2 \dx x 
        &\leq \nu \int_\Rd \nabla m_\nu \cdot A \nabla m_\nu \dx x + \int_\Rd m_\nu^2 \dx x\\  &= \int_\Rd m_\nu n_\nu \dx x\\
        &\leq \frac12 \int_\Rd n_\nu^2 \dx x + \frac12 \int_\Rd m_\nu^2 \dx x,
     \end{align*}
     whence
     \begin{equation*}
         \sup_{t\in[0,T]}\prt*{\nu\int_\Rd |\nabla m_\nu|^2 \dx x} \leq \norm{n_\nu}_{L^\infty(0,T;L^2(\Rd))}^2.
     \end{equation*}
\end{proof}

\begin{lemma}[Dissipation control]
\label{lem:DissipationControl}
There holds
   $$
   \int_0^T\!\!\!\int_\Rd n_\nu |\nabla m_\nu|^2 \dx x \dx t \leq C,
   $$
   where $C>0$ is independent of $\nu$.
\end{lemma}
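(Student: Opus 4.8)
The plan is to sidestep the full entropy identity (which is established only later, in Section~\ref{sec:Entropy}, and hence cannot be invoked at this stage) and instead obtain the bound from a direct energy estimate: one tests the equation for the total density, Eq.~\eqref{eq:SumEquation}, with the velocity potential $m_\nu$ itself. Formally this gives
\[
    \int_0^T\langle\partial_t n_\nu,m_\nu\rangle\dx t + \int_0^T\!\!\!\int_\Rd n_\nu\,\nabla m_\nu\cdot A\nabla m_\nu\dx x\dx t = \int_0^T\!\!\!\int_\Rd m_\nu\prt*{n\1_\nu G\1(n_\nu)+n\2_\nu G\2(n_\nu)}\dx x\dx t .
\]
Using the Brinkman relation \eqref{eq:System-Brinkman}, i.e.\ $n_\nu = m_\nu-\nu\div(A\nabla m_\nu)$, together with the symmetry of $A$, the duality term rewrites as the time derivative of $\tfrac12\int_\Rd m_\nu^2\dx x+\tfrac\nu2\int_\Rd\nabla m_\nu\cdot A\nabla m_\nu\dx x$ plus a remainder $\tfrac\nu2\int_0^T\!\!\int_\Rd\partial_t A\,\nabla m_\nu\cdot\nabla m_\nu\dx x\dx t$. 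Integrating in time, discarding the terms evaluated at $t=T$ (nonnegative, by ellipticity of $A$ and $\nu>0$), and using $\nabla m_\nu\cdot A\nabla m_\nu\ge\lambda|\nabla m_\nu|^2$ with $n_\nu\ge 0$, one is left with
\begin{align*}
    \lambda\int_0^T\!\!\!\int_\Rd n_\nu|\nabla m_\nu|^2\dx x\dx t
    &\le \tfrac12\int_\Rd m_\nu(\cdot,0)^2\dx x + \tfrac\nu2\int_\Rd\nabla m_\nu(\cdot,0)\cdot A\nabla m_\nu(\cdot,0)\dx x\\
    &\quad + \tfrac\nu2\int_0^T\!\!\!\int_\Rd|\partial_t A|\,|\nabla m_\nu|^2\dx x\dx t + \int_0^T\!\!\!\int_\Rd m_\nu\prt*{n\1_\nu G\1(n_\nu)+n\2_\nu G\2(n_\nu)}\dx x\dx t .
\end{align*}

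It then remains to bound the four terms on the right uniformly in $\nu$. The two terms at $t=0$ are controlled by the elliptic energy estimate for Eq.~\eqref{eq:System-Brinkman} at $t=0$ with datum $n^{\mathrm{in}}=n^{(1),\mathrm{in}}+n^{(2),\mathrm{in}}\in L^2(\Rd)$ (multiply by $m_\nu(\cdot,0)$): this yields $\|m_\nu(\cdot,0)\|_{L^2(\Rd)}\le\|n^{\mathrm{in}}\|_{L^2(\Rd)}$ and $\nu\int_\Rd\nabla m_\nu(\cdot,0)\cdot A\nabla m_\nu(\cdot,0)\dx x\le\tfrac12\|n^{\mathrm{in}}\|_{L^2(\Rd)}^2$. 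For the third term I use $|\partial_t A|\le\|A\|_{W^{1,\infty}(0,T;L^\infty(\Rd))}$ and the uniform bound $\sqrt\nu\,\nabla m_\nu\in L^\infty(0,T;L^2(\Rd))$ from the previous lemma, whence $\nu\int_0^T\!\!\int_\Rd|\nabla m_\nu|^2\dx x\dx t\le T\sup_{[0,T]}\big(\nu\|\nabla m_\nu\|_{L^2(\Rd)}^2\big)\le T\|n_\nu\|_{L^\infty(0,T;L^2(\Rd))}^2$, which is $\nu$-independent because $\|n_\nu\|_{L^2}^2\le\|n_\nu\|_{L^1}\|n_\nu\|_{L^\infty}$ and both factors are uniformly bounded by Lemma~\ref{lem:LpBounds}. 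Finally, the growth term is dominated, using $0\le m_\nu\le\bar n$, $0\le n\i_\nu\le\bar n$ and the continuity of $G\i$, by $\bar n\,\max_{i=1,2}\max_{s\in[0,\bar n]}|G\i(s)|\int_0^T\|n_\nu\|_{L^1(\Rd)}\dx t$, again uniformly bounded by Lemma~\ref{lem:LpBounds}. Collecting the estimates yields the claim.

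The only genuinely delicate point --- and the main obstacle --- is making the energy identity rigorous: the weak formulation in Definition~\ref{def:WeakBrinkman} only admits test functions vanishing near $t=T$, and rewriting $\int_0^T\langle\partial_t n_\nu,m_\nu\rangle\dx t$ as the time derivative above requires relating $\partial_t n_\nu$, $\partial_t m_\nu$ and $\partial_t(A\nabla m_\nu)$, which is not immediate from the stated regularity $n_\nu\in C([0,T];L^2(\Rd))$, $\partial_t n_\nu\in L^2(0,T;H^{-1}(\Rd))$, $m_\nu\in L^2(0,T;H^1(\Rd))$. I would handle this by carrying out the computation on the parabolic approximation used to construct weak solutions (cf.\ Section~\ref{sec:Existence}), where all quantities are smooth in time, obtaining the above inequality with constants independent of both the regularisation parameter and $\nu$, and then passing to the limit using weak lower semicontinuity of $\int_0^T\!\int_\Rd n_\nu|\nabla m_\nu|^2$; alternatively one mollifies $n_\nu$ in time, in the spirit of the argument used for the entropy identity in Section~\ref{sec:Entropy}.
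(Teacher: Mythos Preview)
Your proof is correct and is essentially the same argument as the paper's. The paper phrases the computation as differentiating the quantity $\tfrac12\int_\Rd n_\nu m_\nu\dx x$ in time and using the identity $\int_\Rd n_\nu\partial_t m_\nu\dx x = \int_\Rd m_\nu\partial_t n_\nu\dx x - \nu\int_\Rd \nabla m_\nu\cdot\partial_t A\,\nabla m_\nu\dx x$, while you test Eq.~\eqref{eq:SumEquation} with $m_\nu$ and rewrite $\langle\partial_t n_\nu,m_\nu\rangle$ via Brinkman; since $\int_\Rd n_\nu m_\nu\dx x = \int_\Rd m_\nu^2\dx x + \nu\int_\Rd \nabla m_\nu\cdot A\nabla m_\nu\dx x$, these are literally the same energy and the same remainder term, and both bounds on the $\partial_t A$ term and the reaction term are handled identically using the previously established $\sqrt{\nu}\,\nabla m_\nu\in L^\infty(0,T;L^2)$ and $L^1\cap L^\infty$ controls.
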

\begin{proof}
    First, using the symmetry of $A$, we observe
    \begin{align*}
        \int_\Rd n_\nu \partialt {m_\nu} \dx x
        &= \int_\Rd \prt*{-\nu\div(A\nabla m_\nu) + m_\nu}\partialt {m_\nu} \dx x\\ 
        &= \int_\Rd \fpartial t \prt*{-\nu\div(A\nabla m_\nu) + m_\nu} m_\nu \dx x - \nu\int_\Rd \nabla m_\nu \cdot \partialt A \nabla m_\nu \dx x \\
        &= \int_\Rd m_\nu \partialt {n_\nu} \dx x - \nu\int_\Rd \nabla {m_\nu} \cdot \partialt A \nabla {m_\nu} \dx x.
    \end{align*}
    Hence, using Eq.~\eqref{eq:SumEquation} for $n_\nu$ and the ellipticity assumption for $A$,
    \begin{align*}
        \frac12 \ddt \int_\Rd n_\nu m_\nu \dx x 
        &\leq -\int_\Rd n_\nu \nabla m_\nu \cdot A \nabla m_\nu \dx x + \max_i \norm{G\i}_{L^\infty([0,\bar n])}\int_\Rd n_\nu m_\nu \dx x\\
        &\quad- \frac\nu2 \int_\Rd \nabla m_\nu \cdot \partialt A \nabla m_\nu \dx x\\
        &\leq - \lambda \int_\Rd n_\nu |\nabla m_\nu|^2 \dx x + C\int_\Rd n_\nu m_\nu \dx x + \frac\nu2 \norm{\partial_t A}_{L^\infty(\Rd)}\norm{\nabla m_\nu}_{L^2(\Rd)}^2.
    \end{align*}
    At this stage, let us recall that $\sqrt{\nu}\nabla m_\nu$ is bounded in $L^2(0,T;L^2(\Rd))$. Thus, integrating in time, we find
    $$
        \int_0^T\!\!\!\int_\Rd n_\nu |\nabla m_\nu|^2 \dx x \dx t \lesssim 1 +\int_\Rd n^\mathrm{in} m^\mathrm{in} \dx x  \dx x \leq C,
    $$
    where we have used the uniform $L^\infty(0,T;L^2(\Rd))$-bounds.
\end{proof}

\begin{lemma}[Moment estimate]
\label{lem:2ndMoment}
There holds
    \begin{align*}
        \sup_{\nu > 0} \sup_{0 \leq t \leq T} \int_\Rd n_\nu |x|^2 \dx x \leq C,
    \end{align*}
    where the constant $C>0$ does not depend on $\nu$.
\end{lemma}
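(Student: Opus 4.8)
The plan is to differentiate the second moment $M_\nu(t) := \int_{\Rd} n_\nu(x,t)\,|x|^2\dx x$ in time using the total density equation \eqref{eq:SumEquation}, integrate by parts, and then absorb the resulting terms using the $\nu$-uniform bounds already at our disposal — namely the $L^\infty(0,T;L^1\cap L^\infty)$-bound on $n_\nu$ from Lemma~\ref{lem:LpBounds} and the dissipation bound $\int_0^T\!\!\int_{\Rd} n_\nu|\nabla m_\nu|^2\dx x\dx t \leq C$ from Lemma~\ref{lem:DissipationControl}. Testing \eqref{eq:SumEquation} (formally) with $|x|^2$ gives
\begin{align*}
    \ddt M_\nu(t) &= -\int_\Rd \nabla(|x|^2)\cdot n_\nu A\nabla m_\nu \dx x + \int_\Rd |x|^2\bigl(n_\nu\1 G\1(n_\nu) + n_\nu\2 G\2(n_\nu)\bigr)\dx x\\
    &= -2\int_\Rd n_\nu\, x\cdot A\nabla m_\nu \dx x + \int_\Rd |x|^2\bigl(n_\nu\1 G\1(n_\nu) + n_\nu\2 G\2(n_\nu)\bigr)\dx x.
\end{align*}

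For the reaction term, the homeostatic pressure and monotonicity assumptions give $G\i(s) \leq G\i(0) =: g_i$ and more precisely $s\,G\i(s) \le g_i s$ for $s \ge 0$ (since $G\i$ is decreasing), so $n_\nu\1 G\1(n_\nu) + n_\nu\2 G\2(n_\nu) \leq (\max_i g_i)\, n_\nu$, and this term is bounded by $C\,M_\nu(t)$. For the drift term, one uses $|A| \le \|A\|_{L^\infty}$ pointwise and then Cauchy–Schwarz together with $n_\nu = \sqrt{n_\nu}\cdot\sqrt{n_\nu}$:
\begin{align*}
    \Bigl|2\int_\Rd n_\nu\, x\cdot A\nabla m_\nu \dx x\Bigr| &\leq 2\|A\|_{L^\infty}\int_\Rd \sqrt{n_\nu}\,|x|\cdot\sqrt{n_\nu}\,|\nabla m_\nu|\dx x\\
    &\leq \|A\|_{L^\infty}\Bigl(\int_\Rd n_\nu|x|^2\dx x + \int_\Rd n_\nu|\nabla m_\nu|^2\dx x\Bigr) = \|A\|_{L^\infty}\bigl(M_\nu(t) + D_\nu(t)\bigr),
\end{align*}
where $D_\nu(t) := \int_\Rd n_\nu|\nabla m_\nu|^2\dx x$ satisfies $\int_0^T D_\nu(t)\dx t \leq C$ by Lemma~\ref{lem:DissipationControl}. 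Collecting terms, $\ddt M_\nu(t) \leq C\,M_\nu(t) + C\,D_\nu(t)$ with $C$ independent of $\nu$, and since $M_\nu(0) = \int_\Rd |x|^2 n^\mathrm{in}\dx x < \infty$ by the assumption on the initial data, Grönwall's inequality yields
\begin{align*}
    M_\nu(t) \leq e^{Ct}\Bigl(M_\nu(0) + C\int_0^T D_\nu(s)\dx s\Bigr) \leq e^{CT}\bigl(M_\nu(0) + C'\bigr),
\end{align*}
which is the claimed $\nu$-uniform bound after taking the supremum over $t\in[0,T]$.

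The main obstacle is rigour rather than the estimate itself: $|x|^2$ is not an admissible test function (it is neither compactly supported nor bounded), so the computation above must be justified by a cutoff/truncation argument. I would introduce a smooth cutoff $\chi_R(x) = \chi(x/R)$ with $\chi \equiv 1$ on $B_1$, supported in $B_2$, and test \eqref{eq:SumEquation} with $|x|^2\chi_R(x)$; the extra terms involve $\nabla\chi_R$ and $D^2\chi_R$, which carry factors $R^{-1}$ and $R^{-2}$ but are supported on the annulus $R \le |x| \le 2R$ where $|x|^2 \lesssim R^2$, so they are controlled by $\|n_\nu\|_{L^1}$, $\|n_\nu\nabla m_\nu\|_{L^1}$ (finite since $n_\nu$ is bounded and $\nabla m_\nu \in L^2$), and vanish as $R\to\infty$ by dominated convergence — one has to be slightly careful that the constants in the Grönwall step do not depend on $R$, which holds because the dominant contributions are exactly the $R$-independent ones above. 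Alternatively, since the paper already grants $n_\nu \in C([0,T];L^2)$ with $\partial_t n_\nu \in L^2(0,T;H^{-1})$, one can phrase the whole argument at the level of this weak regularity and pass to the limit in the cutoff. I would present the cutoff version compactly, noting that all error terms are $o(1)$ as $R\to\infty$ uniformly in $\nu$.
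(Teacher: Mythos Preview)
Your proof is correct and follows essentially the same route as the paper: differentiate the second moment via the total density equation, bound the reaction term by $C M_\nu(t)$, split the drift term with Young's inequality into $M_\nu(t)$ and the dissipation $\int n_\nu|\nabla m_\nu|^2$, and conclude by Gr\"onwall together with Lemma~\ref{lem:DissipationControl}. The paper presents the computation at the same formal level and does not spell out the cutoff justification you sketch, so your version is, if anything, slightly more careful.
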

\begin{proof}
    This estimate follows from the previous one and the corresponding assumption on the initial data. Indeed,
    \begin{align*}
        \frac12 \ddt \int_\Rd n_\nu |x|^2 \dx x 
        &= \frac12 \int_\Rd |x|^2 \div(n_\nu A\nabla m_\nu) + \frac12 |x|^2 \prt*{n_\nu\1 G\1(n_\nu) + n_\nu\2 G\2(n_\nu)} \dx x\\
        &\leq - \int_\Rd x \cdot n_\nu (A\nabla m_\nu )\dx x + \frac12 \max_{i=1,2}\norm{G\i}_{L^\infty([0,\bar n])} \int_\Rd n_\nu |x|^2 \dx x\\
        &\leq C \int_\Rd n_\nu|x|^2 \dx x + C \int_\Rd n_\nu |A\nabla m_\nu|^2 \dx x\\
        &\leq C \int_\Rd n_\nu|x|^2 \dx x + C,
    \end{align*}
    having used the fact that $A \in L^\infty$ and the dissipation control. We conclude by Gronwall's lemma. 
\end{proof}

\begin{lemma}[Entropy estimate]
\label{lem:entropy_bounds}
There holds
    \begin{align*}
        \sup_{\nu > 0} \sup_{0 \leq t \leq T} \int_\Rd n_\nu |\log n_\nu| \dx x \leq C,
    \end{align*}
    for some constant $C>0$ independent of $\nu>0$.
\end{lemma}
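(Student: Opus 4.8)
The plan is to bound the positive and negative parts of $n_\nu\log n_\nu$ separately: the positive part via the uniform $L^1\cap L^\infty$-control of Lemma~\ref{lem:LpBounds}, and the negative part — where $n_\nu$ is small and the entropy density could a priori be large — via a comparison with a Gaussian weight together with the uniform second-moment bound of Lemma~\ref{lem:2ndMoment}.

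For the positive part, since $0\le n_\nu\le\bar n$ we have pointwise $n_\nu(\log n_\nu)_+\le(\log\bar n)_+\,n_\nu$, so that $\int_\Rd n_\nu(\log n_\nu)_+\dx x\le(\log\bar n)_+\norm{n_\nu(t)}_{L^1(\Rd)}$, which is bounded uniformly in $\nu$ and $t$ by Lemma~\ref{lem:LpBounds}. For the negative part it suffices to control $n_\nu|\log n_\nu|$ on the set $\{n_\nu<1\}$, and there I would split according to whether $n_\nu$ lies above or below the weight $w(x):=e^{-|x|^2}$. On $\{w<n_\nu<1\}$ one has $0<-\log n_\nu<-\log w=|x|^2$, whence $n_\nu|\log n_\nu|\le n_\nu|x|^2$, whose integral is bounded by $\sup_{\nu,t}\int_\Rd n_\nu|x|^2\dx x$ via Lemma~\ref{lem:2ndMoment}. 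On $\{n_\nu\le w\}$ I would use that $s\mapsto-s\log s$ is nondecreasing on $(0,1/e)$: for $|x|\ge1$ one has $w(x)\le1/e$, hence $-n_\nu\log n_\nu\le-w\log w=|x|^2e^{-|x|^2}\in L^1(\Rd)$, while for $|x|<1$ one simply bounds $-n_\nu\log n_\nu\le1/e$ and integrates over the unit ball. Summing the three contributions yields the claimed bound, uniform in $\nu\in(0,\infty)$ and $t\in[0,T]$.

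There is no serious obstacle; the only delicate point is the contribution of regions where $n_\nu$ is exponentially small at spatial infinity, which is precisely what the comparison with $w$ and the monotonicity of $s\mapsto-s\log s$ near the origin handle, reducing everything to the already-established $L^1$- and second-moment bounds.
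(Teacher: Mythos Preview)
Your proof is correct and follows essentially the same approach as the paper, which merely asserts that the bound is a consequence of the $L^1\cap L^\infty$-bounds and the second-moment control and refers to~\cite{DDMS2024, DMS_phenotype} for details. The splitting into positive and negative parts of $\log n_\nu$, with the negative part controlled via comparison with a Gaussian weight and the second moment, is precisely the standard argument alluded to.
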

\begin{proof}
    This is a simple consequence of the $L^1\cap L^\infty$-bounds and the second-moment control and uses no further property of the equation. We refer to ~\cite{DDMS2024, DMS_phenotype} for details.
\end{proof}

\begin{lemma}[Time derivative]
\label{lem:TimeDerivative}
    We have $\partial_t n_\nu \in L^2(0,T;H^{-1}(\Rd))$ uniformly in $\nu>0$. 
\end{lemma}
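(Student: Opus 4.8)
The plan is to read the time derivative off the weak formulation of the equation for the total density, Eq.~\eqref{eq:SumEquation}, and to estimate the resulting functional using only the $\nu$-uniform bounds already available. First I would add the two identities in Definition~\ref{def:WeakBrinkman} and restrict to test functions $\varphi\in C^\infty_c(\Rd\times(0,T))$ vanishing near $t=0$, which gives
\begin{equation*}
    -\int_0^T\!\!\!\int_\Rd n_\nu\,\partialt\varphi\dx x\dx t
    = -\int_0^T\!\!\!\int_\Rd n_\nu\,(A\nabla m_\nu)\cdot\nabla\varphi\dx x\dx t
    +\int_0^T\!\!\!\int_\Rd\big(n_\nu\1 G\1(n_\nu)+n_\nu\2 G\2(n_\nu)\big)\varphi\dx x\dx t,
\end{equation*}
identifying $\partialt{n_\nu}$, in the distributional sense, with $\div(n_\nu A\nabla m_\nu)+n_\nu\1 G\1(n_\nu)+n_\nu\2 G\2(n_\nu)$. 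It then suffices to bound the right-hand side by $C\norm{\varphi}_{L^2(0,T;H^1(\Rd))}$ with $C$ independent of $\nu$ and extend by density.

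For the flux term the point to be careful about is that $\nabla m_\nu$ is \emph{not} uniformly bounded in $L^2$ --- only $\sqrt\nu\,\nabla m_\nu$ is --- so one must retain the degenerate weight $n_\nu$ and appeal to the dissipation estimate. Writing $n_\nu|\nabla m_\nu|=\sqrt{n_\nu}\cdot\sqrt{n_\nu}\,|\nabla m_\nu|\le\sqrt{\bar n}\,\sqrt{n_\nu}\,|\nabla m_\nu|$ by Lemma~\ref{lem:LpBounds}, using $A\in L^\infty$, and the Cauchy--Schwarz inequality, one obtains
\begin{equation*}
    \abs*{\int_0^T\!\!\!\int_\Rd n_\nu\,(A\nabla m_\nu)\cdot\nabla\varphi\dx x\dx t}
    \le\sqrt{\bar n}\,\norm{A}_{L^\infty}\prt*{\int_0^T\!\!\!\int_\Rd n_\nu|\nabla m_\nu|^2\dx x\dx t}^{1/2}\norm{\nabla\varphi}_{L^2(0,T;L^2(\Rd))},
\end{equation*}
and the bracket is bounded uniformly in $\nu$ by Lemma~\ref{lem:DissipationControl}. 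For the reaction term, $0\le n_\nu\le\bar n$ and $G\i\in C^1(\R)$ give $\abs{n_\nu\i G\i(n_\nu)}\le\max_i\norm{G\i}_{L^\infty([0,\bar n])}\,n_\nu$, so Cauchy--Schwarz together with the uniform $L^\infty(0,T;L^1\cap L^\infty(\Rd))$ --- hence $L^2(0,T;L^2(\Rd))$ --- bound of Lemma~\ref{lem:LpBounds} controls $\int_0^T\!\!\!\int_\Rd n_\nu\abs{\varphi}\dx x\dx t$ by $C\norm{\varphi}_{L^2(0,T;L^2(\Rd))}$.

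Combining the two estimates, the right-hand side of the above identity is bounded in absolute value by $C\norm{\varphi}_{L^2(0,T;H^1(\Rd))}$ for every such $\varphi$, with $C$ depending only on $\bar n$, $\norm{A}_{L^\infty}$, $T$, the growth rates $G\i$, and the dissipation constant of Lemma~\ref{lem:DissipationControl} --- but not on $\nu$. By density of $C^\infty_c(\Rd\times(0,T))$ in $L^2(0,T;H^1(\Rd))$ the functional $\varphi\mapsto-\int_0^T\!\!\!\int_\Rd n_\nu\,\partialt\varphi\dx x\dx t$ extends uniquely to a bounded linear functional on $L^2(0,T;H^1(\Rd))$, i.e.\ to an element of $L^2(0,T;H^{-1}(\Rd))$ whose norm is bounded independently of $\nu$, which is the claim. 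I do not expect a substantial obstacle here: the only subtlety is that the weight $n_\nu$ must be carried through the flux term rather than trying to place $\nabla m_\nu$ alone in $L^2$, which is precisely why the dissipation control of Lemma~\ref{lem:DissipationControl} is the decisive input.
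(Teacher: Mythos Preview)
Your proposal is correct and follows essentially the same approach as the paper: both estimate the duality pairing by splitting $n_\nu A\nabla m_\nu$ as $\sqrt{n_\nu}\cdot\sqrt{n_\nu}\,A\nabla m_\nu$, invoke the $L^\infty$-bound on $n_\nu$ and the dissipation control of Lemma~\ref{lem:DissipationControl} for the flux, and use the uniform $L^2$-bound on $n_\nu$ for the reaction term. Your write-up is slightly more explicit about the density argument, but the proofs are otherwise identical.
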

\begin{proof}
    Let $\xi \in L^2(0,T;H^1(\Rd))$. Then
    \begin{align*}
        \int_\Rd \xi \partialt {n_\nu}\dx x &= -\int_\Rd \prt{n_\nu A\nabla m_\nu}\cdot\nabla \xi \dx x + \sum_{i=1,2} \int_\Rd \xi n_\nu\i G\i(n_\nu)\dx x\\
        &\leq \sqrt{\bar n}\norm{A}_{L^\infty(\Rd)}\norm{\sqrt{n_\nu}\nabla m_\nu}_{L^2(\Rd)}\norm{\nabla\xi}_{L^2(\Rd)} + C\norm{\xi}_{L^2(\Rd)}.
    \end{align*}
    After integrating in time, the result follows, using Lemma~\ref{lem:DissipationControl}. 
\end{proof}

\section{The entropy equality}
\label{sec:Entropy}
As foreshadowed in the introduction, we want to exploit the dissipation of the Boltzmann-Shannon entropy functional
\begin{align*}
    \curlyH[n] = \int_\Rd n(x,t) (\log n(x,t) - 1) \, \dx x,
\end{align*}
which encodes regularity of $n_\nu$ that can be leveraged to establish its strong compactness. In consequence, this piece of information can be used to upgrade the weak convergence of the velocity field, $A\nabla m_\nu$, which, in turn, can be used to pass to the limit in System \eqref{eq:System}.

Let us begin the derivation of the entropy identity satisfied by weak solutions of System~\eqref{eq:System} by introducing a mollification of the total density equation and estimating the commutator similar to the strategy of DiPerna-Lions~\cite{DL}.
More precisely, we prove the following result:
\begin{proposition}[Entropy Equality]\label{lem:entropy_inequality}
	Let $(n\1_\nu, n\2_\nu,m_\nu)$ be any weak solution of System~\eqref{eq:System}. There holds
	\begin{equation}
		\label{eq:entropy_equality}
		\begin{split}
			\mathcal{H}[n_\nu(T)] - \mathcal{H}[n_\nu^{\mathrm{in}}]
			 & - \int_0^T \!\!\! \int_\Rd  n_\nu \div(A \nabla m_\nu) \dx x \dx t \\
			 & =\int_0^T \!\!\! \int_\Rd \log n_\nu
			 (n\1_\nu G\1(n_\nu) + n\2_\nu G\2(n_\nu)) \dx x \dx t.
		\end{split}
	\end{equation}
\end{proposition}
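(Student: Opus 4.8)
The plan is to run the DiPerna--Lions renormalisation scheme \cite{DL} on the equation for the total density, Eq.~\eqref{eq:SumEquation}. Write $V_\nu:=A\grad m_\nu$ and $\mathcal R_\nu:=n\1_\nu G\1(n_\nu)+n\2_\nu G\2(n_\nu)$, so that $\partialt{n_\nu}=\div(n_\nu V_\nu)+\mathcal R_\nu$ holds in the sense of distributions on $\Rd\times(0,T)$. The ingredients at hand are: $0\le n_\nu\le\bar n$ and $n_\nu\in C([0,T];L^2(\Rd))$ with $n_\nu(0)=n_\nu^{\mathrm{in}}$; a uniform-in-time bound on $\int_\Rd|x|^2 n_\nu\dx x$ (Lemma~\ref{lem:2ndMoment}); the bounds $|\mathcal R_\nu|\le Cn_\nu$ and $\mathcal R_\nu=0$ on $\{n_\nu=0\}$; and --- decisively --- $V_\nu\in L^2(0,T;H^1(\Rd))$ with $\div V_\nu\in L^\infty(0,T;L^\infty(\Rd))$ by Eq.~\eqref{eq:reg-velo}. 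This last point is precisely why the velocity regularity was set up in Section~\ref{sec:Existence}.

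Mollify in space (all convolutions below are in $x$): with a standard mollifier $\rho_\delta$ set $n_\nu^\delta:=n_\nu\ast\rho_\delta$, so that $0\le n_\nu^\delta\le\bar n$, $n_\nu^\delta$ is smooth in $x$, and, from $\partialt{n_\nu^\delta}=\div\bigl((n_\nu V_\nu)\ast\rho_\delta\bigr)+\mathcal R_\nu\ast\rho_\delta$ (whose right-hand side is in $L^2(0,T;L^2(\Rd))$ for each fixed $\delta$), one has $n_\nu^\delta\in H^1(0,T;L^2(\Rd))\cap C([0,T];L^2(\Rd))$; no regularisation in time is needed. Fix $\varepsilon>0$ and renormalise with $s\mapsto\log(s+\varepsilon)$, which is bounded and Lipschitz on $[0,\bar n]$. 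Multiplying the mollified equation by $\log(n_\nu^\delta+\varepsilon)$, integrating over $\Rd\times(0,T)$, and using the chain rule together with the fundamental theorem of calculus in time,
\[
\begin{aligned}
&\int_\Rd\bigl(h_\varepsilon(n_\nu^\delta(T))-h_\varepsilon(0)\bigr)\dx x-\int_\Rd\bigl(h_\varepsilon(n_\nu^{\mathrm{in}}\ast\rho_\delta)-h_\varepsilon(0)\bigr)\dx x\\
&\qquad=\int_0^T\!\!\!\int_\Rd\Bigl(\div\bigl((n_\nu V_\nu)\ast\rho_\delta\bigr)+\mathcal R_\nu\ast\rho_\delta\Bigr)\log(n_\nu^\delta+\varepsilon)\dx x\dx t,
\end{aligned}
\]
where $h_\varepsilon(s):=(s+\varepsilon)(\log(s+\varepsilon)-1)$; subtracting the constant $h_\varepsilon(0)$ renders the left-hand integrands integrable --- they vanish to first order as $n_\nu^\delta\to0$ --- without affecting their difference.

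Split $\div\bigl((n_\nu V_\nu)\ast\rho_\delta\bigr)=\div(n_\nu^\delta V_\nu)+R_\delta$ with the DiPerna--Lions commutator $R_\delta:=\div\bigl((n_\nu V_\nu)\ast\rho_\delta\bigr)-\div(n_\nu^\delta V_\nu)$. An integration by parts recasts the principal piece as $\int_\Rd\div(n_\nu^\delta V_\nu)\log(n_\nu^\delta+\varepsilon)\dx x=\int_\Rd(\div V_\nu)\,G_\varepsilon(n_\nu^\delta)\dx x$ with $G_\varepsilon(s):=s+\varepsilon\log\tfrac{\varepsilon}{s+\varepsilon}$, and since $0\le G_\varepsilon(s)\le s$ on $[0,\infty)$ with $G_\varepsilon(s)\to s$ as $\varepsilon\to0$, dominated convergence (using $n_\nu^\delta\to n_\nu$ in $L^1$ and $\div V_\nu\in L^\infty$) gives $\int_0^T\!\!\!\int_\Rd(\div V_\nu)G_\varepsilon(n_\nu^\delta)\dx x\dx t\to\int_0^T\!\!\!\int_\Rd n_\nu\div(A\grad m_\nu)\dx x\dx t$ as $\delta\to0$ and then $\varepsilon\to0$ --- this produces the dissipation term of Eq.~\eqref{eq:entropy_equality}. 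It remains to show $\int_0^T\!\!\!\int_\Rd R_\delta\log(n_\nu^\delta+\varepsilon)\dx x\dx t\to0$ as $\delta\to0$ for fixed $\varepsilon$. By the DiPerna--Lions commutator lemma --- applicable precisely because $V_\nu\in L^2(0,T;W^{1,2}_{\mathrm{loc}}(\Rd))$ and $n_\nu\in L^\infty$, which is exactly what Eq.~\eqref{eq:reg-velo} supplies --- $R_\delta\to0$ in $L^1_{\mathrm{loc}}((0,T)\times\Rd)$ and $R_\delta$ is bounded in $L^2(0,T;L^2(\Rd))$ uniformly in $\delta$ (with $\|R_\delta(t)\|_{L^2(\Rd)}\lesssim\|n_\nu\|_{L^\infty}\|\grad V_\nu(t)\|_{L^2(\Rd)}$); since $(n_\nu V_\nu)\ast\rho_\delta-n_\nu^\delta V_\nu\in W^{1,1}(\Rd)$, one may replace $\log(n_\nu^\delta+\varepsilon)$ by $\log(1+n_\nu^\delta/\varepsilon)$, which has the $L^1\cap L^\infty$-decay of $n_\nu$, and then split $\Rd$ into a large ball and its complement --- the ball part vanishing by the commutator lemma, the complement controlled by the moment bound of Lemma~\ref{lem:2ndMoment}. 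This commutator estimate is the heart of the proof and its chief obstacle.

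It remains to pass to the limit in the surviving terms. On the left, $h_\varepsilon(n_\nu^\delta(t))-h_\varepsilon(0)\to n_\nu(t)(\log n_\nu(t)-1)$ in $L^1(\Rd)$ for each $t$ --- using $n_\nu^\delta\to n_\nu$ in $L^1\cap L^2$, the bound $0\le n_\nu^\delta\le\bar n$, and the second-moment estimate to tame the tails exactly as in the proof of Lemma~\ref{lem:entropy_bounds} --- so the left-hand side converges to $\mathcal H[n_\nu(T)]-\mathcal H[n_\nu^{\mathrm{in}}]$. The reaction term converges to $\int_0^T\!\!\!\int_\Rd\log n_\nu\,(n\1_\nu G\1(n_\nu)+n\2_\nu G\2(n_\nu))\dx x\dx t$: here $\mathcal R_\nu\ast\rho_\delta\to\mathcal R_\nu$ in $L^1$, the product is dominated by $Cn_\nu|\log(n_\nu+\varepsilon)|$, uniformly integrable by Lemma~\ref{lem:entropy_bounds} together with the moment bound, and $\mathcal R_\nu=0$ on $\{n_\nu=0\}$ neutralises the logarithmic singularity. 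Collecting the limits and rearranging yields exactly Eq.~\eqref{eq:entropy_equality}. Apart from the commutator step, the remaining delicate point is the bookkeeping of the iterated limit $\delta\to0$, $\varepsilon\to0$ on the unbounded domain $\Rd$: the constants $h_\varepsilon(0)$ and $\log\varepsilon$ must be absorbed before passing to the limit, and all small-density tails are handled via the uniform second moment.
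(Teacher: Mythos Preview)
Your proof is correct and follows essentially the same DiPerna--Lions renormalisation strategy as the paper: mollify the equation for the total density, test with the shifted logarithm $\log(n+\varepsilon)$, split the transport term into a principal part (which after an integration by parts produces $n\,\div(A\nabla m)$ plus an $O(\varepsilon)$ error) and a commutator that vanishes by the DiPerna--Lions lemma thanks to $A\nabla m_\nu\in L^2_t H^1_x$, and then send the two regularisation parameters to zero. The only differences are cosmetic---the paper mollifies in space-time whereas you mollify in space only (noting that the mollified equation already yields $n_\nu^\delta\in H^1_t L^2_x$), and you are more explicit about the unbounded-domain tail control via the second moment where the paper simply invokes \cite[Lemma~2.3]{Lions_Incompressible} and the uniform $L^\infty$-bound on $\log(n_\epsilon+\alpha)$.
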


\begin{proof}
Let $\eta$ be a standard symmetric mollification kernel in $\R^{d+1}$ and let $\eta_\epsilon(x,t):=\epsilon^{-(d+1)}\eta(x/\epsilon, t/\epsilon)$. Then let
\begin{equation*}
    f_\epsilon(x,t):= f\star \eta_\epsilon (x,t) = \int_0^T\!\!\!\int_\Rd f(\tau,y)\eta_\epsilon(x-y,t-\tau)\dx y \dx \tau.
\end{equation*}
Let us, for simplicity, use the notation $v = -A\nabla m_\nu$ for the velocity field. Throughout this proof, we shall omit all subscript $\nu$ for the sake of clarity.
Mollifying the equation for $n$, Eq. \eqref{eq:SumEquation}, we obtain the pointwise identity
\begin{equation*}
    \partialt {n_\epsilon} = -\div(nv)_\epsilon + \prt*{n\1G\1(n)}_\epsilon + \prt*{n\2G\2(n)}_\epsilon.
\end{equation*}
Multiplying by $\log(n_\epsilon+\alpha)$, for some $\alpha>0$, and integrating, we obtain
\begin{equation}
\label{eq:MollifiedEquation}
\begin{aligned}
    &\int_\Rd \prt{n_{\epsilon}(T)+\alpha}\prt{\log(n_{\epsilon}(T)+\alpha) - 1} \dx x - \int_\Rd \prt{n_{\epsilon}(0)+\alpha}\prt{\log(n_{\epsilon}(0)+\alpha) - 1} \dx x\\
    &= -\int_0^T\!\!\!\int_\Rd \log(n_\epsilon+\alpha)\div(n v)_\epsilon\dx x\dx t + \sum_{i=1,2}\int_0^T\!\!\!\int_\Rd \log(n_\epsilon+\alpha)\prt*{n\i G\i(n)}_\epsilon\dx x\dx t.
\end{aligned}
\end{equation}
Since $n\in C([0,T];L^2(\Rd))$, we have $n_\epsilon(t)\to n(t)$ in $L^2(\Rd)$ for every $t\in[0,T]$. Hence, the left-hand side of Eq.~\eqref{eq:MollifiedEquation} converges to
\begin{equation*}
    \int_\Rd \prt{n(T)+\alpha}\prt{\log(n(T)+\alpha) - 1} \dx x - \int_\Rd \prt{n^{\mathrm{in}}+\alpha}\prt{\log(n^{\mathrm{in}}+\alpha) - 1} \dx x,
\end{equation*}
as $\epsilon \to 0$. For the transport term, we have
\begin{align*}
    \int_0^T\!\!\!\int_\Rd \log(n_\epsilon+\alpha)\div(n v)_\epsilon\dx x\dx t &= \int_0^T\!\!\!\int_\Rd \log(n_\epsilon+\alpha)\div(n_\epsilon v)\dx x\dx t\\
    &\quad+ \int_0^T\!\!\!\int_\Rd \log(n_\epsilon+\alpha)\brk*{\div(n v)_\epsilon - \div(n_\epsilon v)}\dx x\dx t.
\end{align*}
Let us first discuss the commutator term. Notice that $\log(n_\epsilon+\alpha) \in L^\infty(0,T;L^\infty(\Rd))$ uniformly in $\epsilon$. Since $v\in L^2(0,T;H^1(\Rd))$ and $n\in L^\infty(0,T;L^2(\Rd))$, we can apply~\cite[Lemma~2.3]{Lions_Incompressible}, to deduce
\begin{align*}
    \norm{\div(n v)_\epsilon - \div(n_\epsilon v)}_{L^1(0,T;L^1(\Rd))}\to 0,
\end{align*}
as $\epsilon \to 0$. Thus, the commutator term vanishes in the limit. For the remaining term, we write
\begin{align*}
   \int_0^T\!\!\!\int_\Rd \log(n_\epsilon+\alpha)\div(n_\epsilon v)\dx x\dx t &= -\int_0^T\!\!\!\int_\Rd \frac{n_\epsilon}{n_\epsilon+\alpha}\nabla n_\epsilon\cdot v \dx x\dx t \\
   &= -\int_0^T\!\!\!\int_\Rd \nabla n_\epsilon\cdot v \dx x\dx t + \alpha\int_0^T\!\!\!\int_\Rd \frac{1}{n_\epsilon+\alpha}\nabla n_\epsilon\cdot v \dx x\dx t\\
   &= \int_0^T\!\!\!\int_\Rd n_\epsilon\div v \dx x\dx t - \alpha\int_0^T\!\!\!\int_\Rd \log{(n_\epsilon+\alpha)}\div v \dx x\dx t\\
   &\to \int_0^T\!\!\!\int_\Rd n\div v \dx x\dx t - \alpha\int_0^T\!\!\!\int_\Rd \log{(n+\alpha)}\div v \dx x\dx t.
\end{align*}
Passing to the limit $\epsilon\to 0$ in Eq.~\eqref{eq:MollifiedEquation}, we therefore obtain
\begin{equation*}
\label{eq:MollifiedEquation2}
\begin{aligned}
    &\int_\Rd \prt{n(T)+\alpha}\prt{\log(n(T)+\alpha) - 1} \dx x - \int_\Rd \prt{n^{\mathrm{in}}+\alpha}\prt{\log(n^{\mathrm{in}}+\alpha) - 1} \dx x\\
    &= -\int_0^T\!\!\!\int_\Rd n\div v \dx x\dx t + \sum_{i=1,2}\int_0^T\!\!\!\int_\Rd \log(n+\alpha)n\i G\i(n)\dx x\dx t\\
    &\quad +\alpha\int_0^T\!\!\!\int_\Rd \log{(n+\alpha)}\div v \dx x\dx t.
\end{aligned}
\end{equation*}
Since $\div v = \div(A\nabla m) \in L^1(0,T;L^1(\Rd))$, the last term vanishes in the limit $\alpha\to 0$. In other words, we can use the monotone convergence theorem 
\begin{align*}
    &\abs*{\int_0^T\!\!\!\int_\Rd \log(n+\alpha)n\i G\i(n)\dx x\dx t - \int_0^T\!\!\!\int_\Rd \log n\;n\i G\i(n)\dx x\dx t} \\
    &\leq \int_0^T\!\!\!\int_\Rd \abs*{\log(n+\alpha)-\log{n}} n\i \abs{G\i(n)}\dx x\dx t\\
    &\leq C\int_0^T\!\!\!\int_\Rd \abs*{\log(n+\alpha)-\log{n}}\dx x\dx t.
\end{align*}
The sequence of nonnegative functions $f_\alpha(x,t) := \abs*{\log(n(x,t)+\alpha)-\log{n(x,t)}}$ is decreasing as $\alpha\to 0$ and converges pointwise to $0$ in $\Rd\times(0,T)$. The same is done with the final and initial times.
This finally leads to the identity:
\begin{align*}
        &\int_\Rd n(T)\prt{\log{n(T)}-1} \dx x - \int_\Rd n^{\mathrm{in}}\prt{\log{n^{\mathrm{in}}} - 1} \dx x - \int_0^T\!\!\!\int_\Rd n\div\prt{A\nabla m} \dx x\dx t\\
    &= \int_0^T\!\!\!\int_\Rd \log{n}\;n\1 G\1(n)\dx x\dx t + \int_0^T\!\!\!\int_\Rd \log{n}\; n\2 G\2(n)\dx x\dx t,
\end{align*}
as required.
\end{proof}

An immediate consequence of the entropy identity is the following uniform bound.
\begin{corollary}
\label{cor:Dissipation}
    There holds
    \begin{equation*}
        -\int_0^T\!\!\!\int_\Rd n_\nu\div(A\nabla m_\nu) \dx x \dx t \leq C,
    \end{equation*}
    where the constant $C$ is independent of $\nu$.
\end{corollary}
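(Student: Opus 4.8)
The plan is to obtain the bound as a one-line consequence of the entropy equality, Eq.~\eqref{eq:entropy_equality}, together with the uniform estimates already established. Rearranging the identity of Proposition~\ref{lem:entropy_inequality} to isolate the dissipation term, I would write
\begin{equation*}
    -\int_0^T\!\!\!\int_\Rd n_\nu\div(A\nabla m_\nu)\dx x\dx t
    = \mathcal{H}[n_\nu^{\mathrm{in}}] - \mathcal{H}[n_\nu(T)]
    + \int_0^T\!\!\!\int_\Rd \log n_\nu\,\prt*{n_\nu\1 G\1(n_\nu) + n_\nu\2 G\2(n_\nu)}\dx x\dx t ,
\end{equation*}
and then bound each of the three terms on the right-hand side uniformly in $\nu$.

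For the two entropy terms I would use that $\mathcal{H}[n]=\int_\Rd(n\log n - n)\dx x$, hence $\abs{\mathcal{H}[n_\nu(t)]}\leq\int_\Rd n_\nu\abs{\log n_\nu}\dx x + \int_\Rd n_\nu\dx x$, and both integrals are bounded uniformly in $\nu$ and in $t\in[0,T]$ by Lemma~\ref{lem:entropy_bounds} and the $L^1$-bound of Lemma~\ref{lem:LpBounds}; in particular this controls both $\mathcal{H}[n_\nu^{\mathrm{in}}]$ (which is in any case a fixed constant determined by the initial data) and $\mathcal{H}[n_\nu(T)]$, the latter being well-defined since $n_\nu\in C([0,T];L^2(\Rd))$. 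For the reaction term I would invoke the uniform $L^\infty$-bound $0\leq n_\nu\i, n_\nu\leq\bar n$ of Lemma~\ref{lem:LpBounds}: since each $G\i$ is continuous on the compact interval $[0,\bar n]$, we get $\abs{n_\nu\1 G\1(n_\nu)+n_\nu\2 G\2(n_\nu)}\leq G_\ast n_\nu$ with $G_\ast:=\max_{i=1,2}\norm{G\i}_{L^\infty([0,\bar n])}$, so that the reaction term is dominated by $G_\ast\int_0^T\!\int_\Rd n_\nu\abs{\log n_\nu}\dx x\dx t\leq G_\ast T C$, again by Lemma~\ref{lem:entropy_bounds}. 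Adding these estimates yields the claim; in fact one obtains the two-sided bound $\abs*{\int_0^T\!\int_\Rd n_\nu\div(A\nabla m_\nu)\dx x\dx t}\leq C$.

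I do not expect any genuine obstacle here — the substantive work has already been packaged into the entropy equality and into the uniform entropy estimate. The only point deserving a moment's care is that $\mathcal{H}$ is not sign-definite (it is negative wherever $n_\nu<1$), so I would bound $\abs{\mathcal{H}[n_\nu(t)]}$ rather than attempt to exploit a favourable sign; both halves of that bound come from the $n\log n$-control of Lemma~\ref{lem:entropy_bounds}, which in turn rests on the $L^1\cap L^\infty$-bounds and the second-moment estimate.
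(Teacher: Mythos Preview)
Your proposal is correct and follows essentially the same route as the paper: rearrange the entropy equality~\eqref{eq:entropy_equality}, then bound the two boundary entropy terms via $\abs{\mathcal{H}[n_\nu(t)]}\leq\int n_\nu\abs{\log n_\nu}+\int n_\nu$ using Lemma~\ref{lem:entropy_bounds} and Lemma~\ref{lem:LpBounds}, and control the reaction term by $\max_i\norm{G\i}_{L^\infty([0,\bar n])}\int_0^T\!\int n_\nu\abs{\log n_\nu}$. Your remark that this actually yields a two-sided bound is a harmless bonus.
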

\begin{proof}
    From Eq. ~\eqref{eq:entropy_equality} we have
    \begin{align*}
        -\int_0^T\!\!\!\int_\Rd n_\nu\div(A\nabla m_\nu) \dx x \dx t &\leq C\int_0^T\!\!\!\int_\Rd n_\nu\abs{\log{n_\nu}} \dx x \dx t + \int_\Rd n^{\mathrm{in}}\abs{\log{n^{\mathrm{in}}}} \dx x\\
        &\quad+ \int_\Rd n_\nu(T)\abs{\log{n_\nu(T)}} \dx x + \norm{n^{\mathrm{in}}}_{L^1(\Rd)} + \norm{n(T)}_{L^1(\Rd)},
    \end{align*}
    which is finite, independently of $\nu$, due to Lemma~\ref{lem:entropy_bounds}
\end{proof}

\section{Strong convergence of the total density}
\label{sec:StrongConvergence}
Having established uniform estimates and the entropy dissipation equality, we now focus on proving the total population's strong compactness $n_\nu$. To achieve this, we first employ compactness arguments exploiting the uniform bounds from the entropy dissipation derived earlier. Henceforth, for any $\nu>0$, we consider a weak solution $(n_\nu\1, n_\nu\2,m_\nu)$ of System~\eqref{eq:System} and we wish to perform the limit $\nu\to0$ along a suitable subsequence. Before addressing the strong compactness, let us note that, by the uniform bounds from above, we have, up to a subsequence,
$$
	n\i_\nu \rightharpoonup n\i_0,\quad\quad n_\nu \rightharpoonup n_0:=n_0\1 + n_0\2,
$$
weakly in $L^2(0,T;L^2(\Rd))$ for some $n\i_0 \in L^\infty(0,T; L^1\cap L^\infty(\Rd))$, $i=1,2$. Furthermore, $m_\nu$ has the same weak limit. Indeed, 
for any $\varphi \in  C_c^\infty(\Rd\times[0,T])$, we find
	\begin{align*}
		&\int_0^T\!\!\!\int_\Rd \prt{-\nu \div (A \nabla m_\nu)} \varphi\dx x \dx t\\
		&= \nu\int_0^T\!\!\!\int_\Rd A \nabla m_\nu  \cdot \nabla \varphi  \dx x \dx t\\
		&= \nu \int_0^T\!\!\!\int_\Rd \div (A \nabla \varphi) m_\nu \dx x \dx t\\
        &\leq \nu \norm{m_\nu}_{L^\infty(0,T; L^\infty(\Rd))} \prt*{\norm{A}_{L^\infty(0,T; L^\infty(\Rd))} + \norm{\nabla A}_{L^{\frac{2q}{q-d}}(0,T;L^q(\Rd))}}C(\varphi)\\
        &\leq C\nu,
	\end{align*}
which vanishes as $\nu\to 0$, for any $\varphi$. It follows from
\begin{equation*}
    -\nu \div (A \nabla m_\nu) + m_\nu = n_\nu
\end{equation*}
that $m_\nu\rightharpoonup n_0$ weakly in $L^2(0,T;L^2(\Rd))$. Having identified the weak limit of $m_\nu$, we will now explain how uniform bounds implied by the entropy equality allow us to infer strong convergence.

\begin{lemma}[$m_\nu$ is strongly compact]
\label{lem:M_Strong}
	Up to a subsequence, we have
	$$
		m_\nu \to n_0,\quad \nabla m_\nu\rightharpoonup \nabla n_0
	$$
	as $\nu \to 0$, in $L^2(0,T;L^2(\Rd))$.
\end{lemma}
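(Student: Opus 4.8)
The plan is to combine the uniform bound on the entropy dissipation (Corollary~\ref{cor:Dissipation}) with the anisotropic Brinkman equation to first upgrade $(\nabla m_\nu)_\nu$ to a bounded sequence in $L^2(0,T;L^2(\Rd))$, and then use the Aubin--Lions--Simon lemma to extract a strongly convergent subsequence. First I would rewrite the dissipation term using the Brinkman equation~\eqref{eq:System-Brinkman}: since $-\nu\div(A\nabla m_\nu) = n_\nu - m_\nu$, an integration by parts gives
\begin{align*}
    -\int_0^T\!\!\!\int_\Rd n_\nu \div(A\nabla m_\nu)\dx x\dx t
    &= \int_0^T\!\!\!\int_\Rd \nabla n_\nu\cdot A\nabla m_\nu\dx x\dx t\\
    &= \frac1\nu\int_0^T\!\!\!\int_\Rd (n_\nu - m_\nu)(-\Delta^{-1}\text{-type})\ldots
\end{align*}
— more directly, test the Brinkman equation against $-\div(A\nabla m_\nu)$ itself, or simply note $\int n_\nu \div(A\nabla m_\nu) = \int m_\nu\div(A\nabla m_\nu) - \nu\int (\div(A\nabla m_\nu))^2 = -\int \nabla m_\nu\cdot A\nabla m_\nu - \nu\int(\div(A\nabla m_\nu))^2$. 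Hence $\int_0^T\!\!\int_\Rd \nabla m_\nu\cdot A\nabla m_\nu\dx x\dx t \leq C$ by Corollary~\ref{cor:Dissipation}, and $\lambda$-ellipticity of $A$ converts this into $\|\nabla m_\nu\|_{L^2(0,T;L^2(\Rd))}\leq C$ uniformly in $\nu$. Combined with the uniform $L^\infty(0,T;L^1\cap L^\infty)$-bound on $m_\nu$, this yields a uniform bound of $(m_\nu)_\nu$ in $L^2(0,T;H^1(\Rd))$.

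Next I would secure compactness in time. The natural approach is to show $(\partial_t m_\nu)_\nu$ is bounded in some negative Sobolev space, but here it is cleaner to transfer time-regularity from $n_\nu$: applying $\partial_t$ to the Brinkman equation gives $-\nu\div(A\nabla \partial_t m_\nu) - \nu\div(\partial_t A\,\nabla m_\nu) + \partial_t m_\nu = \partial_t n_\nu$, and testing against a suitable function together with Lemma~\ref{lem:TimeDerivative} ($\partial_t n_\nu$ bounded in $L^2(0,T;H^{-1})$) and the uniform bound $\sqrt\nu\nabla m_\nu\in L^\infty(0,T;L^2)$ controls $\partial_t m_\nu$ in $L^2(0,T;H^{-1}(\Rd))$ uniformly. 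With $(m_\nu)_\nu$ bounded in $L^2(0,T;H^1(\Rd))$ and $(\partial_t m_\nu)_\nu$ bounded in $L^2(0,T;H^{-1}(\Rd))$, Aubin--Lions--Simon gives strong convergence in $L^2(0,T;L^2_{loc}(\Rd))$ along a subsequence; the uniform second-moment bound (Lemma~\ref{lem:2ndMoment}), which transfers to $m_\nu$ via Brinkman, upgrades this to strong convergence in $L^2(0,T;L^2(\Rd))$ with no loss of mass at infinity. Since $m_\nu\rightharpoonup n_0$ weakly in $L^2(0,T;L^2(\Rd))$ has already been established, the strong limit must coincide: $m_\nu\to n_0$ in $L^2(0,T;L^2(\Rd))$. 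Finally, the uniform $L^2(0,T;H^1)$-bound yields $\nabla m_\nu\rightharpoonup \chi$ weakly in $L^2(0,T;L^2(\Rd))$ for some $\chi$; testing against $\nabla\varphi$ and using strong convergence $m_\nu\to n_0$ identifies $\chi = \nabla n_0$, so in particular $n_0\in L^2(0,T;H^1(\Rd))$.

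The main obstacle I anticipate is obtaining the uniform time-regularity of $m_\nu$ in the presence of the anisotropy: the term $\nu\div(\partial_t A\,\nabla m_\nu)$ must be shown to be harmless, which requires carefully pairing the factor $\sqrt\nu$ from $\sqrt\nu\nabla m_\nu\in L^\infty(0,T;L^2)$ against the remaining $\sqrt\nu$ and the $W^{1,\infty}(0,T;L^\infty)$-regularity of $A$. A secondary subtlety is the localisation/tightness step needed to pass from $L^2_{loc}$ to $L^2(\Rd)$ strong convergence; this is routine given Lemma~\ref{lem:2ndMoment}, but the moment bound for $m_\nu$ itself (rather than $n_\nu$) must be noted, again via the Brinkman equation and the maximum-principle-type comparison already used in the previous lemmas.
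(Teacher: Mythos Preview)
Your proposal is correct and follows essentially the same route as the paper: the entropy-dissipation bound plus the Brinkman identity $n_\nu-m_\nu=-\nu\,\div(A\nabla m_\nu)$ yields the uniform $L^2(0,T;H^1)$ bound on $m_\nu$, time regularity is transferred from $\partial_t n_\nu$ through the Brinkman operator, and Aubin--Lions together with the second-moment control close the argument. The one step you left vague --- the ``suitable function'' in the time-regularity transfer --- is made precise in the paper by a duality trick: for $\varphi\in H^1(\Rd)$ one solves $-\nu\,\div(A\nabla\xi)+\xi=\varphi$, shows $\|\xi\|_{H^1}\leq C\|\varphi\|_{H^1}$ uniformly in $\nu$, and then uses self-adjointness of the Brinkman operator to obtain $\langle\partial_t m_\nu,\varphi\rangle=\langle\partial_t n_\nu,\xi\rangle-\nu\int_{\Rd}\nabla\xi\cdot\partial_t A\,\nabla m_\nu\,\dx x$, which is estimated exactly as you anticipated.
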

\begin{proof}
	The argument is based on the Aubin-Lions lemma.\\ 

    \textbf{Space regularity.} 
    To this end, consider
	\begin{align*}
		\lambda \int_\Rd | \nabla m_\nu |^2 \dx x \dx t 
		&\leq \int_\Rd \nabla m_\nu \cdot A \nabla m_\nu \dx x \dx t\\
		&= - \int_\Rd m_\nu \, \div (A \nabla m_\nu)\dx x \dx t\\
		&= - \int_\Rd n_\nu \, \div (A \nabla m_\nu) \dx x \dx t+ \int_\Rd (n_\nu - m_\nu) \, \div (A \nabla m_\nu)\dx x \dx t\\
		&\leq - \int_\Rd n_\nu \, \div (A \nabla m_\nu)\dx x\dx t\\
		&\leq C,
	\end{align*}
	by the entropy bound, Corollary~\ref{cor:Dissipation}. From this uniform gradient control, we deduce that $n_0\in L^2(0,T;H^1(\Rd))$ and, after extracting a subsequence, $\nabla m_\nu \rightharpoonup \nabla n_0$.\\ 

    \textbf{Time regularity.} We begin by introducing suitable test functions.
	Let $\varphi \in H^1(\Rd)$ be arbitrary and let $\xi$ solve
	$$
		-\nu \div (A \nabla \xi) + \xi = \varphi.
	$$
	We will show that the solution is in $H^1(\Rd)$ uniformly in $\nu$. Indeed, testing this equation by $\xi$ immediately gives
	$$
		\nu \int_\Rd \nabla \xi \cdot A \nabla \xi \dx x + \int_\Rd |\xi|^2 \dx x = \int_\Rd \varphi \xi \dx x, 
	$$
	whence $\norm{\xi}_{L^2(\Rd)} \leq \norm{\varphi}_{L^2(\Rd)}$. In particular, this control does not depend on $\nu$. Next, let us test the equation by $-\div(A \nabla \xi)$ to get
	$$
		\nu \int_\Rd | \div (A \nabla \xi) |^2 \dx x+ \int_\Rd \nabla \xi \cdot A \nabla \xi \dx x = \int_\Rd \nabla \varphi \cdot A \nabla \xi \dx x.
	$$
	From ellipticity of $A$ we get $\norm{\nabla \xi}_{L^2(\Rd)} \leq \lambda^{-1} \norm{A(t)}_{L^\infty(\Rd)} \norm{\nabla \varphi}_{L^2(\Rd)}$, and it is independent of $\nu>0$.\\
	
	Now, let us estimate the $H^{-1}$-norm of $m_\nu$:	
	\begin{align*}
		\int_\Rd \varphi \partialt {m_\nu} \dx x 
		&= \int_\Rd (- \nu \div (A \nabla \xi) + \xi) \partialt {m_\nu}\dx x\\
		&= \int_\Rd \fpartial t (-\nu \div (A \nabla m_\nu) + m_\nu) \xi \dx x - \nu\int_\Rd \nabla\xi \cdot\prt*{\partialt A \nabla m_\nu} \dx x\\
		&= \int_\Rd \partialt {n_\nu}\; \xi\dx x - \nu\int_\Rd \nabla\xi \cdot\prt*{\partialt A \nabla m_\nu} \dx x\\
        &\leq \norm*{\partialt {n_\nu}}_{H^{-1}(\Rd)} \norm{\xi}_{H^1(\Rd)} + \norm*{\partialt A(t)}_{L^\infty(\Rd)}\norm{\nabla m_\nu (t)}_{L^2(\Rd)}\norm{\nabla \xi}_{L^2(\Rd)}\\
		&\leq C(t) \prt*{\norm*{\partialt {n_\nu}}_{H^{-1}(\Rd)}+1} \norm{\varphi}_{H^1(\Rd)},
	\end{align*}
    where $C(t)\in L^\infty(0,T)$.
    Dividing by $\norm{\varphi}_{H^1(\Rd)}$ and passing to the supremum we find 
    $$
        \norm*{\partialt {m_\nu}}_{H^{-1}(\Rd)} \leq C + C \norm*{\partialt {n_\nu}}_{H^{-1}(\Rd)}.
    $$
    Squaring and integrating in time gives
	$$
		\norm*{\partialt {m_\nu}}_{L^2(0,T;H^{-1}(\Rd))} \leq C + C \norm*{\partialt {n_\nu}}_{L^2(0,T;H^{-1}(\Rd))} \leq C,
	$$
    using Lemma~\ref{lem:TimeDerivative}, where the constant $C$ does not depend on $\nu$. \\
	
    With this space and time regularity, we can apply the Aubin-Lions lemma to deduce that there exists a subsequence such that
	$$
		m_\nu \rightarrow n_0\quad \text{in } L^2(0,T; L^2_{\mathrm{loc}}(\Rd)).
	$$
 We extend to global convergence by using the uniform second-moment bound in Lemma~\ref{lem:2ndMoment}.
\end{proof}

\begin{lemma}[Strong convergence of $n_\nu$]
    
\end{lemma}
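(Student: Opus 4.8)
The plan is to deduce the strong convergence of $n_\nu$ in $L^2(0,T;L^2(\Rd))$ directly from the already-established strong convergence of $m_\nu$, using the Brinkman relation to show that $n_\nu$ and $m_\nu$ share the same $L^2$-limit. First I would recall from Corollary~\ref{cor:Dissipation} and the ellipticity of $A$ that
\begin{equation*}
    \int_0^T\!\!\!\int_\Rd |\nabla m_\nu|^2 \dx x \dx t \leq \frac{1}{\lambda}\int_0^T\!\!\!\int_\Rd \nabla m_\nu \cdot A\nabla m_\nu \dx x \dx t = -\frac{1}{\lambda}\int_0^T\!\!\!\int_\Rd m_\nu\, \div(A\nabla m_\nu)\dx x\dx t,
\end{equation*}
and that the right-hand side is bounded uniformly in $\nu$, since $-\int\!\!\int n_\nu\div(A\nabla m_\nu)$ is bounded by the corollary and the correction term $\int\!\!\int(n_\nu - m_\nu)\div(A\nabla m_\nu)$ is handled using the Brinkman equation. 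In fact, testing Eq.~\eqref{eq:System-Brinkman} with $\div(A\nabla m_\nu)$ gives $\nu\|\div(A\nabla m_\nu)\|_{L^2}^2 + \int\nabla m_\nu\cdot A\nabla m_\nu = \int\nabla n_\nu\cdot A\nabla m_\nu$; more directly one simply notes $n_\nu - m_\nu = -\nu\div(A\nabla m_\nu)$, so this correction term equals $-\nu\int\!\!\int|\div(A\nabla m_\nu)|^2 \leq 0$, which is why the estimate in the proof of Lemma~\ref{lem:M_Strong} closes. The upshot is that $(\nabla m_\nu)_\nu$ is bounded in $L^2(0,T;L^2(\Rd))$, hence $(\div(A\nabla m_\nu))_\nu$ is bounded in $L^2(0,T;H^{-1}(\Rd))$ uniformly in $\nu$.

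The key identity is simply the Brinkman equation rearranged: $n_\nu - m_\nu = -\nu\,\div(A\nabla m_\nu)$. I would then estimate, pairing against $m_\nu$ itself (which is legitimate since $m_\nu\in L^2(0,T;H^1)$ and $\div(A\nabla m_\nu)\in L^2(0,T;H^{-1})$),
\begin{equation*}
    \int_0^T\!\!\!\int_\Rd (n_\nu - m_\nu) m_\nu \dx x \dx t = -\nu\int_0^T\!\!\!\int_\Rd \div(A\nabla m_\nu)\, m_\nu \dx x \dx t = \nu\int_0^T\!\!\!\int_\Rd \nabla m_\nu\cdot A\nabla m_\nu \dx x \dx t \to 0,
\end{equation*}
because $\|\nabla m_\nu\|_{L^2(0,T;L^2)}$ is bounded and the prefactor is $\nu$. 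Combined with the weak convergence $n_\nu\rightharpoonup n_0$ and the strong convergence $m_\nu\to n_0$ from Lemma~\ref{lem:M_Strong}, this shows $\int\!\!\int (n_\nu - m_\nu)m_\nu \to 0$ while $\int\!\!\int m_\nu^2 \to \int\!\!\int n_0^2$, so $\int\!\!\int n_\nu m_\nu \to \int\!\!\int n_0^2$. Since also $\int\!\!\int n_\nu m_\nu = \int\!\!\int n_\nu^2 + \int\!\!\int n_\nu(m_\nu - n_\nu)$ and the last term is $-\nu\int\!\!\int n_\nu\div(A\nabla m_\nu) \to 0$ (again by boundedness of the dissipation and the factor $\nu$), we conclude $\int\!\!\int n_\nu^2 \to \int\!\!\int n_0^2$, i.e. $\|n_\nu\|_{L^2(0,T;L^2)}\to\|n_0\|_{L^2(0,T;L^2)}$. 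Together with $n_\nu\rightharpoonup n_0$ weakly in the Hilbert space $L^2(0,T;L^2(\Rd))$, convergence of the norms upgrades this to strong convergence $n_\nu\to n_0$ in $L^2(0,T;L^2(\Rd))$.

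The main obstacle — really the only delicate point — is making rigorous the various dualities $\int\!\!\int \div(A\nabla m_\nu)\, w\,\dx x\dx t$ where $w\in\{m_\nu, n_\nu\}$: one must be careful that $\div(A\nabla m_\nu)$ is only an $L^2(0,T;H^{-1})$ object a priori (away from the a.e. pointwise bound it gains for fixed $\nu>0$ from Eq.~\eqref{eq:reg-velo-2}, which is not $\nu$-uniform), so the integrations by parts should be phrased as $H^{-1}$–$H^1$ pairings and the terms $\int\!\!\int \nabla m_\nu\cdot A\nabla m_\nu$ controlled by ellipticity plus the dissipation bound of Corollary~\ref{cor:Dissipation}. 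Alternatively, and perhaps cleanest, one avoids dualities entirely: multiply $n_\nu - m_\nu = -\nu\div(A\nabla m_\nu)$ by test functions, or simply observe directly that $\|n_\nu - m_\nu\|_{L^2(0,T;H^{-1})} = \nu\|\div(A\nabla m_\nu)\|_{L^2(0,T;H^{-1})} \leq C\nu\|\nabla m_\nu\|_{L^2(0,T;L^2)}\|A\|_{L^\infty}\to 0$, so $n_\nu - m_\nu\to 0$ strongly in $L^2(0,T;H^{-1})$; combined with $m_\nu\to n_0$ strongly in $L^2(0,T;L^2)$ this gives $n_\nu\to n_0$ strongly in $L^2(0,T;H^{-1})$, and then interpolating the $H^{-1}$-convergence against the uniform $L^\infty(0,T;L^1\cap L^\infty)\subset L^2(0,T;L^2)$ bound (e.g. via $\|n_\nu - n_0\|_{L^2}^2 \leq \|n_\nu - n_0\|_{H^{-1}}\|n_\nu - n_0\|_{H^1}$ — which fails as stated since $n_\nu\notin H^1$ — so better: use that the $H^{-1}$-limit and any weak-$L^2$ limit must coincide and upgrade via norm convergence as above) yields the claim. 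I would present the norm-convergence argument of the previous paragraph as the main line, as it is self-contained and uses only the already-proven lemmas.
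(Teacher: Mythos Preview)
Your norm-convergence argument is correct, but it is considerably more roundabout than the paper's proof. The paper simply computes $\|n_\nu-m_\nu\|_{L^2(0,T;L^2)}^2$ directly: using $m_\nu-n_\nu=\nu\,\div(A\nabla m_\nu)$ one gets
\[
\int_0^T\!\!\!\int_{\Rd}|m_\nu-n_\nu|^2\,\dx x\,\dx t
= -\nu\int_0^T\!\!\!\int_{\Rd} n_\nu\,\div(A\nabla m_\nu)\,\dx x\,\dx t
-\nu\int_0^T\!\!\!\int_{\Rd}\nabla m_\nu\cdot A\nabla m_\nu\,\dx x\,\dx t\le C\nu,
\]
and then concludes by the triangle inequality $\|n_\nu-n_0\|_{L^2}\le\|n_\nu-m_\nu\|_{L^2}+\|m_\nu-n_0\|_{L^2}\to 0$. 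Your route instead establishes $\|n_\nu\|_{L^2}^2\to\|n_0\|_{L^2}^2$ through the chain $\int n_\nu m_\nu\to\int n_0^2$ and $\int n_\nu(m_\nu-n_\nu)\to 0$, then invokes weak-plus-norm convergence; this works, but each of your intermediate steps is essentially one half of the paper's single identity, so you are doing the same computation in pieces. Two small points: your sign in ``the last term is $-\nu\int n_\nu\div(A\nabla m_\nu)$'' is off (it should be $+\nu$, since $m_\nu-n_\nu=\nu\,\div(A\nabla m_\nu)$), and Corollary~\ref{cor:Dissipation} as stated only bounds $-\int n_\nu\div(A\nabla m_\nu)$ from above, so to conclude that $\nu\int n_\nu\div(A\nabla m_\nu)\to 0$ you also need the (easy) observation that $\int n_\nu\div(A\nabla m_\nu)\le 0$, which follows from writing $n_\nu=m_\nu+(n_\nu-m_\nu)$ and integrating by parts. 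Your alternative $H^{-1}$ route, as you yourself note, does not close without the norm-convergence step, so it adds nothing.
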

\begin{proof}
This follows from Lemma~\ref{lem:M_Strong} and the boundedness of the entropy dissipation term. Indeed
    \begin{align*}
        \int_0^T\!\!\!\int_\Rd |m_\nu-n_\nu|^2 \dx x \dx t 
        &= \nu\int_0^T\!\!\!\int_\Rd (m_\nu-n_\nu) \div(A\grad m_\nu) \dx x \dx t\\
        &=- \nu\int_0^T\!\!\!\int_\Rd n_\nu  \div(A \nabla m_\nu) + \nabla m_\nu \cdot A \nabla m_\nu \dx x\dx t\\
        &\leq C \nu.
    \end{align*}
The statement follows from the triangulation
    $$
        \norm{n_\nu - n_0}_{L^2(0,T;L^2(\Rd))}^2 \lesssim  \nu + \norm{m_\nu - n_\nu}_{L^2(0,T;L^2(\Rd))}^2 \to 0,
    $$
    as $\nu \to 0$.
\end{proof}
Extracting a subsequence, we can guarantee that $n_\nu\to n_0$ almost everywhere in $\Rd\times(0,T)$.

\section{Limit equation for the total population}
\label{sec:Limit}
With the regularity from the sections before, we are now able to pass to the limit in the weak form of Eq.~\eqref{eq:SumEquation},
\begin{align*}
	-\int_0^T\!\!\!\int_\Rd  n_\nu \partialt\varphi \dx x \dx t &- \int_\Rd \phi(0) n^{\mathrm{in}}\dx x + \int_0^T\!\!\!\int_\Rd \nabla \varphi \cdot A \nabla m_\nu \dx x \dx t\\
    &= \int_0^T\!\!\!\int_\Rd (n_\nu\1 G\1 (n_\nu) + n_\nu\2 G\2 (n_\nu)) \varphi \dx x\dx t,
\end{align*}
to obtain
\begin{equation}
\label{eq:WeakDarcy}
\begin{aligned}
	-\int_0^T\!\!\!\int_\Rd n_0 \partialt\varphi \dx x \dx t &- \int_\Rd \phi(0) n^{\mathrm{in}}\dx x  + \int_0^T\!\!\!\int_\Rd \nabla \varphi \cdot A \nabla n_0 \dx x \dx t \\
    &= \int_0^T\!\!\!\int_\Rd (n_0\1 G\1 (n_0) + n_0\2 G\2 (n_0)) \varphi \dx x\dx t,
\end{aligned}
\end{equation}
for every $\varphi \in C_c^\infty(\Rd\times[0,T))$, i.e., the limit $n_0$ is a weak solution of
\begin{equation*}
    \partialt {n_0} - \div(n_0A\nabla n_0) = n_0\1 G\1 (n_0) + n_0\2 G\2 (n_0),
\end{equation*}
with initial data $n^\mathrm{in}:=n^{(1),\mathrm{in}}+n^{(2),\mathrm{in}}$. Let us point out that, at this stage, $n\i_0$, for $i=1,2$, are merely the weak limits, and it is not clear that they satisfy System \eqref{eq:InviscidSystem}. Next, we recall the following regularity properties of the limit total density $n_0$:
\begin{align*}
    0\leq n_0 \leq \bar n, \quad n_0 \in L^\infty(0,T;L^1\cap L^\infty(\Rd))\cap L^2(0,T;H^1(\Rd)).
\end{align*}
Moreover, from Lemma~\ref{lem:TimeDerivative}, we deduce
\begin{equation*}
    \partialt {n_0} \in L^2(0,T;H^{-1}(\Rd)),
\end{equation*}
and, applying Fatou's lemma to the results of Lemma~\ref{lem:2ndMoment} and Lemma~\ref{lem:entropy_bounds},
\begin{equation*}
    \sup_{t\in[0,T]}\int_\Rd |x|^2 n_0\dx x \leq C,\quad \sup_{t\in[0,T]}\int_\Rd n_0\abs{\log{n_0}}\dx x \leq C.
\end{equation*}
As a consequence of the $H^1$ and $H^{-1}$-bounds, we deduce that $n_0\in C([0,T];L^2(\Rd))$, and that
\begin{equation*}
    -\int_0^T\!\!\!\int_\Rd n_0 \partialt\varphi \dx x \dx t - \int_\Rd \phi(0) n^{\mathrm{in}}\dx x = \int_0^T \langle \partialt {n_0}, \varphi\rangle_{H^{-1},H^1} \dx t.
\end{equation*}
Using this identity, the $L^\infty$-bounds, and the $H^1$-bound, we can extend the class of test function in~\eqref{eq:WeakDarcy} to $\varphi\in L^2(0,T;H^1(\Rd))$.\\

Now, we use $\varphi = \log n_0$ as a test function\footnote{In fact, to guarantee $\varphi \in L^2(0,T;H^1(\Rd))$ we need to use $\varphi(x,t)=\log(n_0(x,t)+\alpha)\phi_R(x)$ for $\alpha> 0$ and a smooth cut-off function $\phi_R$. We leave the details of the limit procedures $\alpha\to0$ and $R\to\infty$ to the reader or refer to an analogue argument carried out in the proof of Proposition 2.6 in \cite{DDMS2024}.} and get the entropy identity
\begin{equation}
\label{eq:DarcyEntropy}
    \begin{aligned}
	\mathcal{H}[n_0(T)] &- \mathcal{H}[n^\mathrm{in}] + \int_0^T \int_\Rd  \nabla n_0 \cdot A \nabla n_0 \dx x \dx t\\
    &= \int_0^T\!\!\!\int_\Rd \log n_0 \prt*{n_0\1G\1(n_0) + n_0\2 G\2(n_0)} \dx x \dx t.
    \end{aligned}
\end{equation}

\section{Strong convergence of $\nabla m$}
\label{sec:StrongVelocity}

In this section, we use the two entropy identities to upgrade the weak convergence $\nabla m_\nu \rightharpoonup \nabla n_0$ to convergence in norm. This will allow us to pass to the limit in the equations for the individual densities $n\i_\nu$, System \eqref{eq:InviscidSystem}. First, we observe that
\begin{align*}
    \int_0^T\!\!\!\int_\Rd  \nabla m_\nu \cdot A \nabla m_\nu \dx x \dx t &= -\int_0^T\!\!\!\int_\Rd   n_\nu \, \div (A \nabla m_\nu) \dx x \dx t\\
        &\quad+ \int_0^T\!\!\!\int_\Rd (n_\nu-m_\nu) \, \div (A \nabla m_\nu) \dx x \dx t\\
        &\leq -\int_0^T\!\!\!\int_\Rd   n_\nu \, \div (A \nabla m_\nu) \dx x \dx t,
\end{align*}
as the second term is nonpositive due to Brinkman's equation.
Let us denote for brevity
\begin{equation*}
    \text{React}_\nu = \int_0^T\!\!\!\int_\R \log n_\nu\prt*{n_\nu\1 G\1(n_\nu)+n_\nu\2 G\2(n_\nu)} \dx x \dx t,
\end{equation*}
and, correspondingly, $\text{React}_0$.
Then, using the entropy identities, Eq.~\eqref{eq:entropy_equality} and Eq.~\eqref{eq:DarcyEntropy},
\begin{equation}
\label{eq:EntropyIntermediate}
\begin{aligned}
	\int_0^T\!\!\!\int_\Rd  \nabla m_\nu \cdot A \nabla m_\nu \dx x \dx t 
	&\leq -\int_0^T\!\!\!\int_\Rd   n_\nu \, \div (A \nabla m_\nu) \dx x \dx t\\
	&= \curlyH[n^\mathrm{in}] - \curlyH[n_\nu(T)] + \text{React}_\nu\\
	&= \curlyH[n_0(T)] - \curlyH[n_\nu(T)] + \text{React}_\nu -  \text{React}_0\\
    &\qquad+ \int_0^T\!\!\!\int_\Rd \nabla n_0 \cdot A \nabla n_0 \dx x \dx t.
\end{aligned}
\end{equation}
We now wish to pass to the limit in this inequality. Using the uniform $L^p$-bounds and the weak forms of Eq.~\eqref{eq:SumEquation} and Eq.~\eqref{eq:WeakDarcy}, it can be easily shown that $n_\nu(T)\rightharpoonup n_0(T)$ weakly in $L^1(\Rd)$. We refer to~\cite[Proposition~4.4]{DMS_phenotype} for more details. It then follows that
\begin{equation*}
    \limsup_{\nu\to 0}\prt*{\curlyH[n_0(T)] - \curlyH[n_\nu(T)]} \leq 0,
\end{equation*}
by convexity of the entropy functional.
Likewise, using the same procedure as in~\cite[Section~4.2]{DDMS2024} and~\cite[Section~4]{DMS_phenotype}, we can show
\begin{equation*}
    \text{React}_\nu \to \text{React}_0.
\end{equation*}
Thus, Eq. \eqref{eq:EntropyIntermediate} yields
$$
	\limsup_{\nu \to 0} \int_0^T\!\!\!\int_\Rd \nabla m_\nu \cdot A \nabla m_\nu \dx x \dx t\leq \int_0^T\!\!\!\int_\Rd\nabla n_0 \cdot A \nabla n_0 \dx x \dx t.
$$
At the same time, since $A$ defines an inner product on $L^2(\Rd\times(0,T))$ ($A$ being symmetric and positive definite) and since $\nabla m_\nu\rightharpoonup\nabla n_0$, 
$$
	\liminf_{\nu \to 0}  \int_0^T\!\!\!\int_\Rd \nabla m_\nu \cdot A \nabla m_\nu \dx x \dx t \geq \int_0^T\!\!\!\int_\Rd \nabla n_0 \cdot A \nabla n_0 \dx x \dx t.
$$
Thus, in conjunction with the ellipticity of $A$, from
$$
	\lambda \norm{\nabla m_\nu - \nabla n_0}_{L^2(0,T;L^2(\Rd))}^2 \leq  \int_0^T \int_\Rd (\nabla m_\nu - \nabla n_0) \cdot A (\nabla m_\nu - \nabla n_0) \dx x\dx t\to 0,
$$
we may infer
$$
	\nabla m_\nu \to \nabla n_0,
$$
strongly in $L^2(0,T;L^2(\Rd))$.
With this information, the proof of Theorem~\ref{thm:Main} is concluded by passing to the limit in the weak formulation of the equations for $n_\nu\i$ in Definition~\ref{def:WeakBrinkman}.

\section*{Acknowledgements}
T.D. acknowledges the support of the National Science Centre, Poland, project no.\\ 2023/51/D/ST1/02316.
M.S. would like to acknowledge the support of the German Academic Exchange Service (DAAD) Project ID 57699543.

\bibliographystyle{abbrv}
\bibliography{lit}

\begin{thebibliography}{10}

\bibitem{BelgacemJabin}
F.~B. Belgacem and P.-E. Jabin.
\newblock Compactness for nonlinear continuity equations.
\newblock {\em J. Funct. Anal.}, 264(1):139--168, 2013.

\bibitem{bertsch1987degenerate}
M.~Bertsch, M.~Gurtin, and D.~Hilhorst.
\newblock On a degenerate diffusion equation of the form c (z) t= $\vartheta$ (zx) x with application to population dynamics.
\newblock {\em Journal of differential equations}, 67(1):56--89, 1987.

\bibitem{bertsch1987interacting}
M.~Bertsch, M.~E. Gurtin, D.~Hilhorst, et~al.
\newblock On interacting populations that disperse to avoid crowding: the case of equal dispersal velocities.
\newblock {\em Nonlinear Anal}, 11(4):493--499, 1987.

\bibitem{bertsch1985interacting}
M.~Bertsch, M.~E. Gurtin, D.~Hilhorst, and L.~A. Peletier.
\newblock On interacting populations that disperse to avoid crowding: preservation of segregation.
\newblock {\em Journal of mathematical biology}, 23:1--13, 1985.

\bibitem{bertsch2012nonlinear}
M.~Bertsch, D.~Hilhorst, H.~Izuhara, M.~Mimura, et~al.
\newblock A nonlinear parabolic-hyperbolic system for contact inhibition of cell-growth.
\newblock {\em Differ. Equ. Appl}, 4(1):137--157, 2012.

\bibitem{BE2022}
M.~Burger and A.~Esposito.
\newblock Porous medium equation and cross-diffusion systems as limit of nonlocal interaction.
\newblock {\em Nonlinear Analysis}, 235:113347, 2023.

\bibitem{BT1983}
S.~N. Busenberg and C.~C. Travis.
\newblock Epidemic models with spatial spread due to population migration.
\newblock {\em Journal of Mathematical Biology}, 16(2):181--198, 1983.

\bibitem{CCP2019}
J.~A. Carrillo, K.~Craig, and F.~S. Patacchini.
\newblock A blob method for diffusion.
\newblock {\em Calc. Var. Partial Differential Equations}, 58(2):58:53, 2019.

\bibitem{CES2023}
J.~A. Carrillo, C.~Elbar, and J.~Skrzeczkowski.
\newblock Degenerate {C}ahn-{H}illiard systems: From nonlocal to local.
\newblock {\em arXiv preprint arXiv:2303.11929}, 2023.

\bibitem{CESW2024}
J.~A. Carrillo, A.~Esposito, J.~Skrzeczkowski, and J.~S.-H. Wu.
\newblock Nonlocal particle approximation for linear and fast diffusion equations.
\newblock {\em arXiv: 2408.02345}, 2024.

\bibitem{CEW2023}
J.~A. Carrillo, A.~Esposito, and J.~S.-H. Wu.
\newblock Nonlocal approximation of nonlinear diffusion equations.
\newblock {\em arXiv: 2302.08248}, 2023.

\bibitem{CFSS2017}
J.~A. Carrillo, S.~Fagioli, F.~Santambrogio, and M.~Schmidtchen.
\newblock Splitting schemes and segregation in reaction cross-diffusion systems.
\newblock {\em SIAM Journal on Mathematical Analysis}, 50(5):5695--5718, 2018.

\bibitem{CHS2017}
J.~A. Carrillo, Y.~Huang, and M.~Schmidtchen.
\newblock Zoology of a nonlocal cross-diffusion model for two species.
\newblock {\em SIAM Journal on Applied Mathematics}, 78(2):1078--1104, 2018.

\bibitem{CCS2019}
M.~Colombo, G.~Crippa, and L.~V. Spinolo.
\newblock On the singular local limit for conservation laws with nonlocal fluxes.
\newblock {\em Arch. Ration. Mech. Anal.}, 233(3):1131--1167, 2019.

\bibitem{CJT2024}
K.~Craig, M.~Jacobs, and O.~Turanova.
\newblock Nonlocal approximation of slow and fast diffusion.
\newblock {\em arXiv: 2312.11438}, 2024.

\bibitem{DDMS2024}
N.~David, T.~D\k{e}biec, M.~Mandal, and M.~Schmidtchen.
\newblock A degenerate cross-diffusion system as the inviscid limit of a nonlocal tissue growth model.
\newblock {\em SIAM J. Math. Anal.}, 56(2):2090--2114, 2024.

\bibitem{DPSV2021}
T.~D{\k{e}}biec, B.~Perthame, M.~Schmidtchen, and N.~Vauchelet.
\newblock Incompressible limit for a two-species model with coupling through {Brinkman}'s law in any dimension.
\newblock {\em J. Math. Pures Appl. (9)}, 145:204--239, 2021.

\bibitem{DS2020}
T.~D{\k{e}}biec and M.~Schmidtchen.
\newblock Incompressible limit for a two-species tumour model with coupling through {Brinkman}'s law in one dimension.
\newblock {\em Acta Appl. Math.}, 169:593--611, 2020.

\bibitem{DM1990}
P.~Degond and F.-J. Mustieles.
\newblock A deterministic approximation of diffusion equations using particles.
\newblock {\em SIAM Journal on Scientific and Statistical Computing}, 11(2):293--310, 1990.

\bibitem{DIS2024}
M.~Di~Francesco, V.~Iorio, and M.~Schmidtchen.
\newblock The approximation of the quadratic porous medium equation via nonlocal interacting particles subject to repulsive morse potential.
\newblock {\em arXiv preprint arXiv:2401.10676}, 2024.

\bibitem{DL}
R.~J. DiPerna and P.-L. Lions.
\newblock Ordinary differential equations, transport theory and {S}obolev spaces.
\newblock {\em Invent. Math.}, 98(3):511--547, 1989.

\bibitem{DMS_phenotype}
T.~D\k{e}biec, M.~Mandal, and M.~Schmidtchen.
\newblock From finite to continuous phenotypes in (visco-)elastic tissue growth models.
\newblock {\em arXiv: 2409.02904}, 2024.

\bibitem{DHPP2023}
M.~Doumic, S.~Hecht, B.~Perthame, and D.~Peurichard.
\newblock Multispecies cross-diffusions: From a nonlocal mean-field to a porous medium system without self-diffusion.
\newblock {\em Journal of Differential Equations}, 389:228--256, 2024.

\bibitem{ES2023_2}
C.~Elbar and J.~Skrzeczkowski.
\newblock Degenerate {Cahn}-{Hilliard} equation: from nonlocal to local.
\newblock {\em J. Differ. Equations}, 364:576--611, 2023.

\bibitem{ES2023}
C.~Elbar and J.~Skrzeczkowski.
\newblock On the inviscid limit connecting {B}rinkman's and {D}arcy's models of tissue growth with nonlinear pressure.
\newblock {\em arXiv preprint arXiv:2306.03752}, 2023.

\bibitem{EHS2023}
A.~Esposito, G.~Heinze, and A.~Schlichting.
\newblock Graph-to-local limit for the nonlocal interaction equation.
\newblock {\em arXiv: 2306.03475}, 2023.

\bibitem{EPSS2021}
A.~Esposito, F.~S. Patacchini, A.~Schlichting, and D.~Slep{\v{c}}ev.
\newblock Nonlocal-interaction equation on graphs: gradient flow structure and continuum limit.
\newblock {\em Arch. Ration. Mech. Anal.}, 240(2):699--760, 2021.

\bibitem{FP2008}
A.~Figalli and R.~Philipowski.
\newblock Convergence to the viscous porous medium equation and propagation of chaos.
\newblock {\em ALEA Lat. Am. J. Probab. Math. Stat.}, 4:185--203, 2008.

\bibitem{galiano2020turing}
G.~Galiano and V.~Gonz{\'a}lez-Tabernero.
\newblock Turing instability analysis of a singular cross-diffusion problem.
\newblock {\em Electronic Journal of Differential Equations}, 2021(55):1--17, 2021.

\bibitem{Gib2020}
L.~Gibelli, editor.
\newblock {\em Crowd {Dynamics}, {Volume} 2. {Theory}, models, and applications}.
\newblock Model. Simul. Sci. Eng. Technol. Cham: Birkh{\"a}user, 2020.

\bibitem{GB2018}
L.~Gibelli and N.~Bellomo, editors.
\newblock {\em Crowd dynamics, {Volume} 1. {Theory}, models, and safety problems}.
\newblock Model. Simul. Sci. Eng. Technol. Cham: Birkh{\"a}user, 2018.

\bibitem{GP1984}
M.~E. Gurtin and A.~C. Pipkin.
\newblock A note on interacting populations that disperse to avoid crowding.
\newblock {\em Q. Appl. Math.}, 42:87--94, 1984.

\bibitem{GPS2019}
P.~Gwiazda, B.~Perthame, and A.~{\'{S}wierczewska-Gwiazda}.
\newblock A two-species hyperbolic--parabolic model of tissue growth.
\newblock {\em Comm. Partial Differential Equations}, 44(12):1605--1618, 2019.

\bibitem{HB2020}
T.~Hillen and A.~Buttensch{\"o}n.
\newblock Nonlocal adhesion models for microorganisms on bounded domains.
\newblock {\em SIAM J. Appl. Math.}, 80(1):382--401, 2020.

\bibitem{Jac2022}
M.~Jacobs.
\newblock Non-mixing lagrangian solutions to the multispecies porous media equation.
\newblock {\em ArXiv Preprint, arXiv:2208.01792}, 2022.

\bibitem{Jac2021}
M.~Jacobs.
\newblock Existence of solutions to reaction cross diffusion systems.
\newblock {\em SIAM Journal on Mathematical Analysis}, 55(6):6991--7023, 2023.

\bibitem{jungel2024nonlocal}
A.~J{\"u}ngel, M.~Vetter, and A.~Zurek.
\newblock A nonlocal regularization of a generalized {B}usenberg-{T}ravis cross-diffusion system.
\newblock {\em arXiv preprint arXiv:2407.01123}, 2024.

\bibitem{Lions_Incompressible}
P.-L. Lions.
\newblock {\em Mathematical topics in fluid mechanics. {V}ol. 1}, volume~3 of {\em Oxford Lecture Series in Mathematics and its Applications}.
\newblock The Clarendon Press, Oxford University Press, New York, 1996.
\newblock Incompressible models, Oxford Science Publications.

\bibitem{LM2001}
P.-L. Lions and S.~Mas-Gallic.
\newblock Une méthode particulaire déterministe pour des équations diffusives non linéaires.
\newblock {\em Comptes Rendus de l'Académie des Sciences - Series I - Mathematics}, 332(4):369--376, 2001.

\bibitem{LX2021}
J.-G. Liu and X.~Xu.
\newblock Existence and incompressible limit of a tissue growth model with autophagy.
\newblock {\em SIAM Journal on Mathematical Analysis}, 53(5), 2021.

\bibitem{mimura2010free}
M.~Mimura, R.~Dal~Passo, and M.~Bertsch.
\newblock A free boundary problem arising in a simplified tumour growth model of contact inhibition.
\newblock {\em Interfaces and Free Boundaries}, 12(2):235--250, 2010.

\bibitem{MT2015}
H.~Murakawa and H.~Togashi.
\newblock Continuous models for cell--cell adhesion.
\newblock {\em Journal of theoretical biology}, 374:1--12, 2015.

\bibitem{Oel1990}
K.~Oelschl{\"a}ger.
\newblock Large systems of interacting particles and the porous medium equation.
\newblock {\em J. Differ. Equations}, 88(2):294--346, 1990.

\bibitem{Oel2001}
K.~Oelschl\"ager.
\newblock A sequence of integro-differential equations approximating a viscous porous medium equation.
\newblock {\em Z. Anal. Anwendungen}, 20(1):55--91, 2001.

\bibitem{PHP2024}
K.~J. Painter, T.~Hillen, and J.~R. Potts.
\newblock Biological modeling with nonlocal advection-diffusion equations.
\newblock {\em Math. Models Methods Appl. Sci.}, 34(1):57--107, 2024.

\bibitem{Phi2007}
R.~Philipowski.
\newblock Interacting diffusions approximating the porous medium equation and propagation of chaos.
\newblock {\em Stochastic Process. Appl.}, 117(4):526--538, 2007.

\bibitem{PX2020}
B.~C. Price and X.~Xu.
\newblock Global existence theorem for a model governing the motion of two cell populations.
\newblock {\em Kinetic and Related Models}, 13(6):1175--1191, 2020.

\bibitem{VS2015}
A.~Volkening and B.~Sandstede.
\newblock Modelling stripe formation in zebrafish: an agent-based approach.
\newblock {\em Journal of The Royal Society Interface}, 12(112), 2015.

\end{thebibliography}

\end{document}